\pgfplotsset{width=6cm}
\pgfplotsset{grid style={dotted}}
\pgfplotsset{
  title style={font=\small},
  tick label style={font=\scriptsize},
  legend style={font=\scriptsize},
}
\pgfplotsset{every axis legend/.append style={
    at={(0.02,0.02)},
    anchor=south west,
    legend cell align=left}}
\renewcommand{\phi}{\varphi}
\newcommand{\trans}{\mathsf{T}}
\newcommand{\isdef}{\mathrel{\mathrel{\mathop:}=}}
\DeclareMathOperator{\Grad}{\nabla}
\DeclareMathOperator{\Div}{div}
\DeclareMathOperator{\Cov}{\mathbb{C}ov}
\DeclareMathOperator{\Mean}{\mathbb{E}}
\DeclareMathOperator{\Img}{img}
\DeclareMathOperator*{\essinf}{ess\,inf}
\DeclareMathOperator*{\esssup}{ess\,sup}
\newcommand{\dif}[2]{\frac{\mathrm{d}^{#2}}{\mathrm{d}{#1}^{#2}}}
\newcommand{\diff}[1]{\,\mathrm{d}#1}
\DeclarePairedDelimiter\groupp{(}{)}%
\DeclarePairedDelimiter\groups{[}{]}%
\DeclarePairedDelimiter\groupb{\{}{\}}%
\DeclarePairedDelimiter\groupa{\langle}{\rangle}%
\DeclarePairedDelimiter\groupc{\lceil}{\rceil}%
\DeclarePairedDelimiter\norms{\lvert}{\rvert}%
\DeclarePairedDelimiter\norm{\lVert}{\rVert}%
\DeclareFontFamily{U}{matha}{\hyphenchar\font45}
\DeclareFontShape{U}{matha}{m}{n}{
      <5> <6> <7> <8> <9> <10> gen * matha
      <10.95> matha10 <12> <14.4> <17.28> <20.74> <24.88> matha12
      }{}
\DeclareSymbolFont{matha}{U}{matha}{m}{n}
\DeclareFontFamily{U}{mathx}{\hyphenchar\font45}
\DeclareFontShape{U}{mathx}{m}{n}{
      <5> <6> <7> <8> <9> <10>
      <10.95> <12> <14.4> <17.28> <20.74> <24.88>
      mathx10
      }{}
\DeclareSymbolFont{mathx}{U}{mathx}{m}{n}
\DeclareMathDelimiter{\ltvert}{\mathopen}{matha}{"7E}{mathx}{"17}
\DeclareMathDelimiter{\rtvert}{\mathclose}{matha}{"7E}{mathx}{"17}
\DeclarePairedDelimiter\normt{\ltvert}{\rtvert}%
\newcommand{\Abfm}{\mathbf{A}}
\newcommand{\Bbfm}{\mathbf{B}}
\newcommand{\Fbfm}{\mathbf{F}}
\newcommand{\Ibfm}{\mathbf{I}}
\newcommand{\Mbfm}{\mathbf{M}}
\newcommand{\Vbfm}{\mathbf{V}}
\newcommand{\nbfm}{\mathbf{n}}
\newcommand{\ubfm}{\mathbf{u}}
\newcommand{\vbfm}{\mathbf{v}}
\newcommand{\wbfm}{\mathbf{w}}
\newcommand{\xbfm}{\mathbf{x}}
\newcommand{\ybfm}{\mathbf{y}}
\newcommand{\Nbbb}{\mathbb{N}}
\newcommand{\Pbbb}{\mathbb{P}}
\newcommand{\Rbbb}{\mathbb{R}}
\newcommand{\Bcal}{\mathcal{B}}
\newcommand{\Ccal}{\mathcal{C}}
\newcommand{\Fcal}{\mathcal{F}}
\newcommand{\Mcal}{\mathcal{M}}
\newcommand{\Tcal}{\mathcal{T}}
\newcommand{\Xcal}{\mathcal{X}}
\newcommand{\alphabfm}{{\boldsymbol{\alpha}}}
\newcommand{\betabfm}{{\boldsymbol{\beta}}}
\newcommand{\gammabfm}{{\boldsymbol{\gamma}}}
\newcommand{\mubfm}{{\boldsymbol{\mu}}}
\newcommand{\psibfm}{{\boldsymbol{\psi}}}
\newcommand{\zerobfm}{{\boldsymbol{0}}}
\theoremstyle{plain}             
\newtheorem{theorem}{Theorem}[section]
\newtheorem{lemma}[theorem]{Lemma}
\newtheorem{assumption}[theorem]{Assumption}
\begin{document}

\title[UQ for PDEs with Anisotropic Random Diffusion]{Uncertainty Quantification for PDEs with Anisotropic Random Diffusion}

\author{Helmut Harbrecht\and Michael Peters \and Marc Schmidlin}
\address[Helmut Harbrecht\and Michael Peters\and Marc Schmidlin]{
Departement Mathematik und Informatik, Universit\"at Basel, Spiegelgasse 1, 4051 Basel, Schweiz\\
}
\email{\{helmut.harbrecht,michael.peters,marc.schmidlin\}@unibas.ch}

\maketitle

\begin{abstract}
In this article, we consider elliptic diffusion problems 
with an anisotropic random diffusion coefficient. We model 
the notable direction in terms of a random vector field and 
derive regularity results for the solution's dependence on 
the random parameter. It turns out that the decay of the vector 
field's Karhunen-Lo\`eve expansion entirely determines this
regularity. The obtained results allow for sophisticated quadrature 
methods, such as the quasi-Monte Carlo method or the anisotropic 
sparse grid quadrature, in order to approximate quantities of interest, 
like the solution's mean or the variance. Numerical examples in three
spatial dimensions are provided to supplement the presented theory.
\end{abstract}


%
\section{Introduction}
Many phenomena in science and engineering are modelled as
boundary value problems for an unknown function.
Because, in general, the computation of an exact solution
of such a boundary value problem is infeasible,
it is necessary to use numerical schemes that yield
approximations of the solution.
When using numerical methods,
such as finite elements or finite differences,
the behaviour of these numerical simulations is generally well understood
for input data, such as boundary values or coefficients, that are given exactly.
However, for many applications,
the input data are not known exactly and can be thought of being subject to uncertainty,
for example when they are based on measurements.
This then implies that the solution of the boundary value problem is also subject to uncertainty.

Specifically, let us consider the second order diffusion problem
{where uncertainty} regarding the diffusion coefficient $\Abfm$
has been accounted for by considering it as a random matrix field
over a given probability space $\groupp{\Omega, \Fcal, \Pbbb}$, ie.\
\begin{equation*}
  \text{for almost every $\omega \in \Omega$: }
  \left\{
    \begin{alignedat}{2}
      - \Div_\xbfm \groupp[\big]{\Abfm(\omega) \Grad_\xbfm u(\omega)} &= f
      &\quad &\text{in }D , \\
      u(\omega) &= 0 &\quad &\text{on }\Gamma_D , \\
      \groupa[\big]{\Abfm(\omega) \Grad_\xbfm u(\omega), \nbfm} &= g &\quad &\text{on }\Gamma_N ,
    \end{alignedat}
  \right.
\end{equation*}
on a domain $D$ given a mixed boundary condition, where
the function $f$ describes the known source and
the function $g$ the conormal derivative at the Neumann boundary.

We note that the numerical treatment of isotropic random diffusion 
coefficients, ie.\ $\Abfm(\omega) = a(\omega) \Ibfm$, has already 
been considered, for example, in \cite{BC02,BNT,CDS,FST,MK}.
However, since the simulations in applications may require anisotropic 
diffusion, we subsequently consider anisotropic random diffusion 
coefficients that fit the following description: The anisotropic random
diffusion coefficient has, at every point in $D$, a notable direction 
regarding diffusion; that is a direction perpendicular to which diffusion 
is isotropic with a global strength $a$ and in which the strength of 
diffusion may be considerably different and also may vary in space.
We can represent this notable direction with its spatially varying 
directional strength as a random vector field $\Vbfm$.

One interpretation for random diffusion coefficients of this type
is to consider diffusion in a medium that is comprised of thin fibres.
The direction of these fibres is then described by the direction of $\Vbfm$
and the strength of diffusion in a fibre is given by $\norm{\Vbfm}_2$.
The diffusion from a fibre to a neighbouring fibre is the diffusion
that is perpendicular to $\Vbfm$ and is thus considered to have strength $a$. 

The application we have in mind here amounts from cardiac 
electrophysiology: The electrical activation of the human heart 
has been an active area of research during the last decades. It has been
known for a long time that the heart exhibits a fibrous structure. 
By now, it is well-understood that these fibres play a major role for 
both, the electrical and the mechanical properties of the heart muscle.
More precisely, the fibres have a very complex but also well-organized 
structure, exhibiting key features that can be identified in all healthy 
subjects, such as a helical distribution with opposite orientations, from 
the endocardium to the epicardium, see eg.\ \cite{RSG07,SCCM+12}, 
in particular, for visualisations of the cardiac fiber structure. However, 
the exact fibre dislocations vary between different individuals and may 
also change over time within a single individual due to pathologies, 
such as infarctions. Then, the fibre structure is perturbed with the 
introduction of high variability areas in the presence of scars. This 
uncertainty in the fibre direction is modeled via the vector field \({\bf V}\). 
In this particular application, the ratio between \(\|{\bf V}\|_2\) and \(a\) 
is typically of the order ten to one, see eg.\ \cite{GK09,SCCM+12}.

Moreover, random diffusion coefficients of this type may also be used
to model the diffusion in a laminar medium,
ie.\ a medium comprised of stacked thin layers,
by choosing the direction of $\Vbfm$ as the normals on the layers.
Then, we have that in a layer the diffusion is isotropic with the strength $a$
and in between layers, that is in the direction given by $\Vbfm$,
the diffusion strength is given by $\norm{\Vbfm}_2$.

The sections hereafter are organised as follows:
Section~2 introduces some basic definitions and notation
followed by the model problem,
which also includes the formula that expresses $\Abfm$
in terms of $\Vbfm$ and $a$.
Then, in Section~3, we reformulate the model problem
into a stochastically parametric and spatially weak formulation,
by using the Karhunen\hyp{}Lo\`eve expansion of the diffusion describing field $\Vbfm$
to arrive at a stochastically parametrised form of the diffusion coefficient $\Abfm$.
This also enables us to conclude the well\hyp{}posedness of the model problem.
Section~4 is dedicated to the regularity of the solution of the model problem
with respect to the random parameter
from the Karhunen\hyp{}Lo\`eve expansion of the diffusion describing field $\Vbfm$.
Numerical examples are then provided in Section~5 as validation.
Lastly, our conclusions are given in Section~6.

\section{Problem formulation}
\subsection{Notation and precursory remarks}
For a given Banach space $\Xcal$ and a complete measure 
space $\Mcal$ with measure $\mu$ the space $L_\mu^p(\Mcal;\Xcal)$ 
for $1 \leq p \leq \infty$ denotes the Bochner space, see \cite{HP57},
which contains all equivalence classes of strongly measurable functions
$v \colon \Mcal \to \Xcal$ with finite norm
\begin{equation*}
  \norm{v}_{L_\mu^p(\Mcal; \Xcal)} \isdef
  \begin{cases}
    \groups[\bigg]{\displaystyle\int_\Mcal \norm[\big]{v(x)}_{\Xcal}^p \diff{\mu(x)}}^{1/p} , & p < \infty , \\
    \displaystyle\esssup_{x \in \Mcal} \norm[\big]{v(x)}_{\Xcal} , & p = \infty .
  \end{cases}
\end{equation*}
A function $v \colon \Mcal \to \Xcal$ is strongly measurable if there exists 
a sequence of countably\hyp{}valued measurable functions $v_n \colon \Mcal \to \Xcal$,
such that for almost every $m \in \Mcal$ we have $\lim_{n \to \infty} v_n(m) = v(m)$.
Note that, for finite measures $\mu$, we also have the usual inclusion
$L_\mu^p(\Mcal; \Xcal) \supset L_\mu^q(\Mcal; \Xcal)$ for $1 \leq p < q \leq \infty$.

When $\Xcal$ is a separable Hilbert space and $\Mcal$ is a separable measure space,
the Bochner space $L_\mu^2(\Mcal; \Xcal)$ is also a separable Hilbert space with the inner product
\begin{equation*}
  \left(u, v\right)_{L_\mu^2(\Mcal; \Xcal)} \isdef \int_\Mcal \groupp[\big]{u(x), v(x)}_{\Xcal} \diff{\mu(x)} 
\end{equation*}
and is isomorphic to the tensor product space $L_\mu^2(\Mcal) \otimes \Xcal$, see \cite{LC}.

Subsequently, we will always equip $\Rbbb^d$ with the norm $\norm{\cdot}_2$
induced by the canonical inner product $\langle\cdot,\cdot\rangle$
and $\Rbbb^{d \times d}$ with the norm $\norm{\cdot}_F$
induced by the Frobenius inner product $\langle\cdot,\cdot\rangle_F$.
Then, for $\vbfm, \wbfm \in \Rbbb^d$, the Cauchy\hyp{}Schwartz inequality gives us
\begin{equation*}
  \norms{\vbfm^\trans \wbfm} = \norms[\big]{\groupa{\vbfm, \wbfm}} \leq \norm{\vbfm}_2 \norm{\wbfm}_2
  \quad\text{and the special case}\quad
  \vbfm^\trans \vbfm = \groupa{\vbfm, \vbfm} = \norm{\vbfm}_2^2 ,
\end{equation*}
and we also have, by straightforward computation, that
\(
  \norm{\vbfm \wbfm^\trans}_F = \norm{\vbfm}_2 \norm{\wbfm}_2 .
\)

\subsection{The model problem}
Let $(\Omega, \Fcal, \Pbbb)$ be a separable, complete probability space.
We consider the following second order diffusion problem with a random diffusion coefficient
\begin{equation}
  \label{eq:smwpoisson}
  \text{for almost every $\omega \in \Omega$: } \left\{
    \begin{alignedat}{2}
      - \Div_\xbfm \groupp[\big]{\Abfm(\omega) \Grad_\xbfm u(\omega)} &= f
      &\quad &\text{in }D , \\
      u(\omega) &= 0 &\quad &\text{on }\Gamma_D , \\
      \groupa[\big]{\Abfm(\omega) \Grad_\xbfm u(\omega), \nbfm} &= g &\quad &\text{on }\Gamma_N ,
    \end{alignedat}
  \right.
\end{equation}
where $D \subset \Rbbb^d$ is a Lipschitz domain with $d \geq 1$
and $\partial D = \overline{\Gamma}_D \cup \overline{\Gamma}_N$
is a disjoint decomposition of the boundary. The function $f \in 
\widetilde{H}^{-1}(D)$ describes the known source and $g \in 
H^{-1/2}(\Gamma_N)$ the known conormal derivative at the 
Neumann boundary. The random matrix field $\Abfm \in 
L_\Pbbb^\infty\groupp[\big]{\Omega; L^\infty(D; \Rbbb^{d \times d})}$ 
is the stochastically and spatially varying diffusion coefficient,
which satisfies the uniform ellipticity condition
\begin{equation}
  \label{eq:Aellipticity}
  a_{\min} \leq \essinf_{\xbfm \in D} \lambda_{\min}\groupp[\big]{\Abfm(\xbfm, \omega)} \leq
  \esssup_{\xbfm \in D} \lambda_{\max}\groupp[\big]{\Abfm(\xbfm, \omega)} \leq a_{\max}
  \quad\text{$\Pbbb$-almost surely}
\end{equation}
for some constants $0 < a_{\min} \leq a_{\max} < \infty$
and is almost surely symmetric almost everywhere. Without loss 
of generality, we assume $a_{\min} \leq 1$ and $a_{\max} \geq 1$.

To be able to capture the two situations arising from the fact
that we might have a pure Neumann boundary value problem,
we introduce the solution space $V$ dependent upon the boundary\hyp{}measure of $\Gamma_D$:
If $\Gamma_D$ has non\hyp{}zero boundary\hyp{}measure,
    then we define
    \begin{equation*}
      V \isdef H_{\Gamma_D}^1(D)
      \isdef \groupb[\big]{v \in H^1(D) : v(\xbfm) = 0 \text{ for all } \xbfm \in \Gamma_D} .
    \end{equation*}
   If $\Gamma_D$ has zero boundary\hyp{}measure,
    ie.\ if we have a pure Neumann boundary value problem,
    then we set
    \[
      V \isdef H_{*}^1(D)
      \isdef \groupb[\big]{v \in H^1(D) : (v,1)_{L^2(D)} = 0}
    \]
       and also require that $f$ and $g$ fulfil the compatibility condition
    \begin{equation*}
      \int_D f(\xbfm) \diff{\xbfm} = - \int_{\Gamma_N} g(\xbfm) \diff{s} .
    \end{equation*}

In both cases, the norm equivalence theorem of Sobolev, see \cite{A},
implies for all \(v\in V\) and some constant {\(0<c_V\leq1\)} that
\begin{equation*}
  c_V \norm{v}_{H^1(D)} \leq \norm{v}_V = \norm[\big]{\Grad_{\xbfm} v}_{L^2(D; \Rbbb^d)} \leq \norm{v}_{H^1(D)} .
\end{equation*}

We will consider anisotropic diffusion coefficients that are of the form
\begin{equation}
  \label{eq:AdmV}
  \Abfm(\xbfm, \omega) \isdef a \Ibfm + \groupp[\Big]{\norm[\big]{\Vbfm(\xbfm, \omega)}_2 - a} \frac{\Vbfm(\xbfm, \omega) \Vbfm^\trans(\xbfm, \omega)}{\Vbfm^\trans(\xbfm, \omega) \Vbfm(\xbfm, \omega)} ,
\end{equation}
where $a \in \Rbbb$ is a given value and
$\Vbfm \in L_\Pbbb^\infty\groupp[\big]{\Omega; L^\infty(D; \Rbbb^d)}$ is a random vector field.
Furthermore, we require that they satisfy $b_{\min} \leq a \leq b_{\max}$ and
\begin{equation}
  \label{eq:Vellipticity}
  b_{\min} \leq \essinf_{\xbfm \in D} \norm[\big]{\Vbfm(\xbfm, \omega)}_2
  \leq \esssup_{\xbfm \in D} \norm[\big]{\Vbfm(\xbfm, \omega)}_2 \leq b_{\max}
  \quad\text{$\Pbbb$-almost surely}
\end{equation}
for some constants $0 < b_{\min} \leq b_{\max} < \infty$. Without 
loss of generality, we assume $b_{\min} \leq 1$ and $b_{\max} \geq 1$.

We note that the field $\Abfm$ accounts for a medium that has 
homogeneous diffusion strength $a$ perpendicular to $\Vbfm$
and has diffusion strength $\norm[\big]{\Vbfm(\xbfm, \omega)}_2$
in the direction of $\Vbfm$. The randomness of the specific direction 
and length of $\Vbfm$ therefore quantifies the uncertainty of this 
notable direction and its diffusion strength.

\begin{lemma}
  A diffusion coefficient of form \eqref{eq:AdmV} is well\hyp{}formed
  and indeed also satisfies the uniform ellipticity condition \eqref{eq:Aellipticity}
  with $b_{\min}$ and $b_{\max}$.
\end{lemma}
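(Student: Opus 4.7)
The plan is to reduce the statement to elementary spectral analysis of a rank-one perturbation of a scalar multiple of the identity. First I would verify well-formedness: by \eqref{eq:Vellipticity} we have $\Vbfm^\trans \Vbfm = \norm{\Vbfm}_2^2 \geq b_{\min}^2 > 0$ essentially everywhere on $D$ and $\Pbbb$-almost surely, so the denominator never vanishes where it is needed and the rank-one tensor $\Vbfm \Vbfm^\trans / (\Vbfm^\trans \Vbfm)$ is well-defined pointwise. Symmetry of $\Abfm$ is immediate since both $\Ibfm$ and outer products $\Vbfm \Vbfm^\trans$ are symmetric and the coefficients are scalars. Membership in $L_\Pbbb^\infty\groupp{\Omega; L^\infty(D; \Rbbb^{d \times d})}$ follows from the triangle inequality and the identity $\norm{\Vbfm \Vbfm^\trans}_F = \norm{\Vbfm}_2^2$ recorded in the preliminaries, combined with the given bounds on $a$ and $\norm{\Vbfm}_2$.

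Next, the key observation is that, at every point, the matrix
\begin{equation*}
  \Pbfm(\xbfm, \omega) \isdef \frac{\Vbfm(\xbfm, \omega) \Vbfm^\trans(\xbfm, \omega)}{\Vbfm^\trans(\xbfm, \omega) \Vbfm(\xbfm, \omega)}
\end{equation*}
is the orthogonal projector onto $\mathrm{span}(\Vbfm(\xbfm, \omega))$: a direct computation shows $\Pbfm \Vbfm = \Vbfm$, while $\Pbfm \wbfm = 0$ for every $\wbfm$ with $\groupa{\wbfm, \Vbfm} = 0$. Thus $\Pbfm$ has the simple eigenvalue $1$ with eigenvector $\Vbfm$ and the eigenvalue $0$ of multiplicity $d-1$ on the orthogonal complement $\Vbfm^\perp$.

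Consequently, writing $\Abfm = a \Ibfm + \groupp{\norm{\Vbfm}_2 - a} \Pbfm$, the spectrum of $\Abfm(\xbfm, \omega)$ consists of $\norm{\Vbfm(\xbfm, \omega)}_2$ (simple, in the direction $\Vbfm$) and $a$ (with multiplicity $d-1$, on $\Vbfm^\perp$). This immediately gives
\begin{equation*}
  \lambda_{\min}\groupp[\big]{\Abfm(\xbfm, \omega)} = \min\groupb[\big]{a, \norm{\Vbfm(\xbfm, \omega)}_2} , \qquad
  \lambda_{\max}\groupp[\big]{\Abfm(\xbfm, \omega)} = \max\groupb[\big]{a, \norm{\Vbfm(\xbfm, \omega)}_2} .
\end{equation*}
Applying the hypothesis $b_{\min} \leq a \leq b_{\max}$ together with \eqref{eq:Vellipticity} then yields \eqref{eq:Aellipticity} with the constants $b_{\min}$ and $b_{\max}$. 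There is no genuine obstacle here; the entire argument reduces to recognising that $\Pbfm$ is a rank-one orthogonal projection and reading off the eigenvalues of the resulting rank-one update of $a \Ibfm$.
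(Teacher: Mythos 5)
Your proof is correct and follows essentially the same route as the paper: both arguments use \eqref{eq:Vellipticity} to bound the denominator away from zero, then diagonalise $\Abfm$ by noting that $\Vbfm$ is an eigenvector with eigenvalue $\norm{\Vbfm}_2$ and any vector orthogonal to $\Vbfm$ is an eigenvector with eigenvalue $a$, so that $\lambda_{\min} = \min\{a, \norm{\Vbfm}_2\}$ and $\lambda_{\max} = \max\{a, \norm{\Vbfm}_2\}$, and conclude via $b_{\min} \leq a \leq b_{\max}$. Your framing in terms of the rank-one orthogonal projector is just a slightly more explicit packaging of the paper's eigenvector computation.
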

\begin{proof}
  For almost every $\omega \in \Omega$ and almost every $\xbfm \in D$ we have
  that $\Abfm(\xbfm, \omega)$ is well\hyp{}formed, because of
  \begin{equation*}
    \Vbfm^\trans(\xbfm, \omega) \Vbfm(\xbfm, \omega) = \norm{\Vbfm(\xbfm, \omega)}_2^2 \geq b_{\min}^2 > 0 ,
  \end{equation*}
  and clearly symmetric. 
  Furthermore, we can choose $\ubfm_2, \ldots, \ubfm_d \in \Rbbb^d$ that are perpendicular to $\Vbfm(\xbfm, \omega)$
  and are linearly independent;
  then, we know that, for all $i = 2, \ldots, d$,
  \begin{equation*}
    \Abfm(\xbfm, \omega) \ubfm_i = a \ubfm_i
\quad\text{and}\quad 
    \Abfm(\xbfm, \omega) \Vbfm(\xbfm, \omega) = \norm[\big]{\Vbfm(\xbfm, \omega)}_2 \Vbfm(\xbfm, \omega) .
  \end{equation*}
  This means that, for almost every $\omega \in \Omega$ and almost every $\xbfm \in D$,
  \begin{align*}
    \lambda_{\min}\groupp[\big]{\Abfm(\xbfm, \omega)} &= \min\groupb[\Big]{a, \norm[\big]{\Vbfm(\xbfm, \omega)}_2} \geq b_{\min},\\
    \lambda_{\max}\groupp[\big]{\Abfm(\xbfm, \omega)} &= \max\groupb[\Big]{a, \norm[\big]{\Vbfm(\xbfm, \omega)}_2} \leq b_{\max}.
  \end{align*}
Therefore, $\Abfm$ satisfies the uniform ellipticity condition \eqref{eq:Aellipticity}
with $b_{\min}$ and $b_{\max}$.
\end{proof}

Thus, we will set $a_{\min} \isdef b_{\min}$ and $a_{\max} \isdef b_{\max}$
and, from here on, solely use $a_{\min}$ and $a_{\max}$.

\section{Problem reformulation}
\subsection{Karhunen-Lo\`eve expansion}
To make the random field, and hence also the diffusion 
coefficient, feasible for numerical computations, we separate 
the spatial variable $\xbfm$ and the stochastic parameter $\omega$
by considering the Karhunen\hyp{}Lo\`eve expansion of $\Vbfm$.
The mean field $\Mean[\Vbfm] \colon \Omega \to \Rbbb^d$
and the matrix\hyp{}valued covariance field $\Cov[\Vbfm] 
\colon D \times D \to \Rbbb^{d \times d}$ are given by
\begin{equation*}
  \Mean[\Vbfm](\xbfm)
  = \int_\Omega \Vbfm(\xbfm, \omega) \diff{\Pbbb(\omega)}
\end{equation*}
and
\begin{equation*}
  \Cov[\Vbfm](\xbfm, \xbfm')
  = \int_\Omega \Vbfm_0(\xbfm, \omega) \Vbfm_0^\trans(\xbfm', \omega) \diff{\Pbbb(\omega)} ,
\end{equation*}
respectively, where
\begin{equation*}
  \Vbfm_0(\xbfm, \omega) \isdef \Vbfm(\xbfm, \omega) - \Mean[\Vbfm](\xbfm) .
\end{equation*}

Let \(\{\lambda_k,\psibfm_k\}_k\) denote the eigenpairs 
corresponding to the Hilbert-Schmidt operator {\(\Ccal\)} that is induced 
from the kernel \(\Cov[\Vbfm](\xbfm, \xbfm')\), ie.
\begin{equation*}
    (\Ccal \ubfm)(\xbfm)\isdef \int_D \Cov[\Vbfm](\xbfm, \xbfm') \ubfm(\xbfm') \diff{\xbfm'} .
\end{equation*}
Then, the Karhunen Lo\`eve expansion of \({\bf V}\) is given by
\begin{equation}\label{eq:KL}
  \Vbfm(\xbfm, \omega)
  = \Mean[\Vbfm](\xbfm) + \sum_{k=1}^{\infty} \sqrt{\lambda_k} \psibfm_k(\xbfm) Y_k(\omega),
\end{equation}
where the uncorrelated and centered random variables 
\(\{Y_k\}_k\) are given according to
\begin{equation*}
  Y_k(\omega) \isdef \frac{1}{\sqrt{\lambda_k}} \int_D 
  \Vbfm_0^\trans(\xbfm, \omega) \psibfm_k(\xbfm) \diff{\xbfm}.
\end{equation*}
We note that the convergence of \eqref{eq:KL} is in the 
$L_\Pbbb^2\groupp[\big]{\Omega; L^2(D; \Rbbb^d)}$\hyp{}norm.
However, we have the following lemma.

\begin{lemma}
  We have $\Img(\Ccal) \subset L^\infty(D; \Rbbb^{d})$.
  This implies that $\psibfm_k \in L^\infty(D; \Rbbb^d)$
  and, as a consequence, also $Y_k(\omega) \in L_\Pbbb^\infty(\Omega)$.
\end{lemma}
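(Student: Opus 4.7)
The plan is to exploit the essential boundedness of $\Vbfm$ to show that the covariance kernel is itself essentially bounded on $D\times D$; this turns $\Ccal$ into a smoothing operator from $L^2(D;\Rbbb^d)$ into $L^\infty(D;\Rbbb^d)$, and the two claims follow routinely.

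First I would show that $\Cov[\Vbfm]\in L^\infty(D\times D;\Rbbb^{d\times d})$. The recentering $\Vbfm_0 = \Vbfm - \Mean[\Vbfm]$ is essentially bounded by $2a_{\max}$ (using Jensen's inequality for the mean), so passing the Frobenius norm inside the Bochner integral defining $\Cov[\Vbfm]$ and applying the identity $\norm{\vbfm\wbfm^\trans}_F = \norm{\vbfm}_2\norm{\wbfm}_2$ recorded in Section~2.1 gives, for almost every $\xbfm,\xbfm'\in D$,
\begin{equation*}
  \norm[\big]{\Cov[\Vbfm](\xbfm,\xbfm')}_F \leq \int_\Omega \norm{\Vbfm_0(\xbfm,\omega)}_2 \norm{\Vbfm_0(\xbfm',\omega)}_2 \diff{\Pbbb(\omega)} \leq 4 a_{\max}^2.
\end{equation*}

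Next I would bound $\Ccal\ubfm$ pointwise. For $\ubfm\in L^2(D;\Rbbb^d)$, the Cauchy-Schwarz inequality applied under the spatial integral yields
\begin{equation*}
  \norm[\big]{(\Ccal\ubfm)(\xbfm)}_2 \leq \int_D \norm[\big]{\Cov[\Vbfm](\xbfm,\xbfm')}_F \norm{\ubfm(\xbfm')}_2\diff{\xbfm'} \leq 4 a_{\max}^2 \norms{D}^{1/2} \norm{\ubfm}_{L^2(D;\Rbbb^d)},
\end{equation*}
where $\norms{D}$ denotes the (finite) Lebesgue measure of the Lipschitz domain $D$. Taking the essential supremum in $\xbfm$ proves $\Img(\Ccal)\subset L^\infty(D;\Rbbb^d)$. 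Since each nontrivial eigenpair satisfies $\boldsymbol\psi_k = \lambda_k^{-1}\Ccal\boldsymbol\psi_k$ and $\boldsymbol\psi_k\in L^2(D;\Rbbb^d)$ by construction, the first consequence $\boldsymbol\psi_k\in L^\infty(D;\Rbbb^d)$ is immediate.

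Finally, for the $L^\infty$ bound on $Y_k$, I would estimate directly from its definition: for almost every $\omega\in\Omega$,
\begin{equation*}
  \norms{Y_k(\omega)} \leq \frac{1}{\sqrt{\lambda_k}} \int_D \norm{\Vbfm_0(\xbfm,\omega)}_2 \norm{\boldsymbol\psi_k(\xbfm)}_2\diff{\xbfm} \leq \frac{2 a_{\max} \norms{D}}{\sqrt{\lambda_k}} \norm{\boldsymbol\psi_k}_{L^\infty(D;\Rbbb^d)},
\end{equation*}
which is finite by the step just proved. There is no real obstacle here; the only thing to be careful about is keeping the norms consistent when moving from the Bochner definition of $\Cov[\Vbfm]$ to a pointwise estimate, which is exactly where the identity $\norm{\vbfm\wbfm^\trans}_F=\norm{\vbfm}_2\norm{\wbfm}_2$ does the work.
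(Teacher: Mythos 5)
Your proof is correct and follows essentially the same route as the paper: you bound the covariance kernel in $L^\infty$ and then use the Cauchy--Schwarz inequality on the spatial integral to show that $\Ccal$ maps $L^2(D;\Rbbb^d)$ into $L^\infty(D;\Rbbb^d)$. You additionally make explicit the bound $4a_{\max}^2$ on the kernel and the deductions for $\psibfm_k$ and $Y_k$, which the paper states without detail.
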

\begin{proof}
For almost every $\xbfm \in D$, we know that
  $\Cov[\Vbfm](\xbfm, \cdot) \in L^\infty(D; \Rbbb^{d \times d})$,
  where we clearly have
  \begin{equation*}
    \norm[\big]{\Cov[\Vbfm](\xbfm, \cdot)}_{L^\infty(D; \Rbbb^{d \times d})}
    \leq \norm[\big]{\Cov[\Vbfm]}_{L^\infty(D; L^\infty(D; \Rbbb^{d \times d}))} .
  \end{equation*}
  Thus, we can calculate for almost every $\xbfm \in D$ that
  \begin{align*}
    \norm[\big]{(\Ccal \ubfm)(\xbfm)}_2
    &
    \leq \int_D \norm[\big]{\Cov[\Vbfm](\xbfm, \xbfm') \ubfm(\xbfm')}_2 \diff{\xbfm'} \\
    &\leq \int_D \norm[\big]{\Cov[\Vbfm](\xbfm, \xbfm')}_F \norm[\big]{\ubfm(\xbfm')}_2 \diff{\xbfm'} \\
    &\leq \norm[\big]{\Cov[\Vbfm]}_{L^\infty(D; L^\infty(D; \Rbbb^{d \times d}))}
    \sqrt{\norms{D}}
    \norm{\ubfm}_{L^2(D; \Rbbb^d)} 
  \end{align*}
  and conclude that
  \(
    \norm{\Ccal \ubfm}_{L^\infty(D; \Rbbb^{d})}
    \leq \norm[\big]{\Cov[\Vbfm]}_{L^\infty(D; L^\infty(D; \Rbbb^{d \times d}))}
    \sqrt{\norms{D}} \norm{\ubfm}_{L^2(D; \Rbbb^d)}. 
  \)
\end{proof}

Now, by parametrisation of the $Y_k$ as $y_k$ and replacing the 
$\sqrt{\lambda_k}$ with $\sigma_k$, we may assume, without loss 
of generality, that $y_k \in \groups{{-1}, 1}$, when considering the 
vector field $\Vbfm$ in the parametrised form
\begin{equation}
  \label{eq:KLp}
  \Vbfm(\xbfm, \ybfm)
  = \Mean[\Vbfm](\xbfm) + \sum_{k=1}^{\infty} \sigma_k \psibfm_k(\xbfm) y_k ,
\end{equation}
where $\ybfm = (y_k)_{k \in \Nbbb} \in \square \isdef \groups{{-1}, 1}^{\Nbbb}$.
Consequently, we can also view $\Abfm(\xbfm, \ybfm)$ and $u(\xbfm, \ybfm)$ 
as being parametrised by $\ybfm$ and restate \eqref{eq:smwpoisson} as
\begin{equation}
  \label{eq:spmwpoisson}
  \text{for almost every $\ybfm \in \square$: } \left\{
    \begin{alignedat}{2}
      - \Div_\xbfm \groupp[\big]{\Abfm(\ybfm) \Grad_\xbfm u(\ybfm)} &= f
      &\quad &\text{in }D , \\
      u(\ybfm) &= 0 &\quad &\text{on }\Gamma_D, \\
      \groupa[\big]{\Abfm(\ybfm) \Grad_\xbfm u(\ybfm), \nbfm} &= g &\quad &\text{on } \Gamma_N.
    \end{alignedat}
  \right.
\end{equation}

We now impose some common assumptions,
which make the Karhunen\hyp{}Lo\`eve expansion computationally feasible.
\begin{assumption}
The random variables $(Y_k)_{k \in \Nbbb}$ are independent and uniformly distributed
on $\groups[\big]{{-\sqrt{3}}, \sqrt{3}}$, ie.\ \(\sigma_k=\sqrt{3\lambda_k}\). Moreover, the sequence 
$\gammabfm = \groupp{\gamma_k}_{k \in \Nbbb_0}$, given by 
\begin{equation*}
      \gamma_k \isdef \norm[\big]{\sigma_k \psibfm_k}_{L^\infty(D; \Rbbb^d)} ,
    \end{equation*}
    is at least in $\ell^1(\Nbbb_0)$,
    where we have defined $\psibfm_0 \isdef \Mean[\Vbfm]$ and
    $\sigma_0 \isdef 1$.
\end{assumption}

\begin{lemma}
  The representation \eqref{eq:KLp} {also} converges in
  $L_\Pbbb^\infty\groupp[\big]{\Omega; L^\infty(D; \Rbbb^d)}$.
\end{lemma}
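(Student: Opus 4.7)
The plan is to realise the partial sums of \eqref{eq:KLp} as a Cauchy sequence in the Banach space $L_\Pbbb^\infty\groupp[\big]{\Omega; L^\infty(D; \Rbbb^d)}$ via a Weierstrass M\hyp{}style argument driven by the $\ell^1$\hyp{}summability of $\gammabfm$, and then to identify the resulting limit with $\Vbfm$ by appealing to the $L_\Pbbb^2\groupp[\big]{\Omega; L^2(D; \Rbbb^d)}$\hyp{}convergence of \eqref{eq:KL} that was noted earlier.

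Concretely, I would set
\[
  S_N(\xbfm, \ybfm) \isdef \Mean[\Vbfm](\xbfm) + \sum_{k=1}^N \sigma_k \psibfm_k(\xbfm) y_k
\]
and, for $M < N$, combine the triangle inequality in $\Rbbb^d$ with the pointwise bound $|y_k| \leq 1$ that comes from the parametrisation $\ybfm \in \square$ to obtain
\[
  \norm[\big]{S_N(\xbfm, \ybfm) - S_M(\xbfm, \ybfm)}_2 \leq \sum_{k=M+1}^N \norm[\big]{\sigma_k \psibfm_k(\xbfm)}_2 \leq \sum_{k=M+1}^N \gamma_k
\]
for almost every $(\xbfm, \ybfm) \in D \times \square$. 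This uniform bound survives the essential suprema in $\xbfm$ and $\ybfm$, so the $\ell^1(\Nbbb_0)$\hyp{}summability of $\gammabfm$ from the preceding assumption makes $(S_N)_N$ Cauchy in the complete Bochner space $L_\Pbbb^\infty\groupp[\big]{\Omega; L^\infty(D; \Rbbb^d)}$; hence it converges there to some limit $\widetilde\Vbfm$.

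To identify $\widetilde\Vbfm$ with $\Vbfm$, I would invoke the continuous inclusion $L_\Pbbb^\infty\groupp[\big]{\Omega; L^\infty(D; \Rbbb^d)} \hookrightarrow L_\Pbbb^2\groupp[\big]{\Omega; L^2(D; \Rbbb^d)}$, which holds because $\Pbbb$ is a probability measure and $D$ has finite Lebesgue measure. This transports the $L^\infty$\hyp{}convergence into $L^2$\hyp{}convergence, and, combined with the already established $L^2$\hyp{}convergence $S_N \to \Vbfm$, uniqueness of limits forces $\widetilde\Vbfm = \Vbfm$. I do not anticipate any serious obstacle: the whole argument is driven by the $\ell^1$\hyp{}summability hypothesis, and the only step that references anything specific to the Karhunen\hyp{}Lo\`eve expansion itself is the identification of the limit.
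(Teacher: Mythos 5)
Your proposal is correct and follows essentially the same route as the paper: both show the partial sums form a Cauchy sequence in $L_{\Pbbb}^\infty\groupp[\big]{\Omega; L^\infty(D;\Rbbb^d)}$ by bounding the tail with $\sum_{k>M}\gamma_k$ and invoking $\gammabfm\in\ell^1(\Nbbb_0)$ together with completeness. Your additional step identifying the limit with $\Vbfm$ via the embedding into $L_\Pbbb^2\groupp[\big]{\Omega; L^2(D;\Rbbb^d)}$ is a point the paper leaves implicit, and it is a correct and worthwhile bit of extra care.
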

\begin{proof}
  We define
  \begin{equation*}
    \Vbfm^M(\xbfm, \ybfm) \isdef \Mean[\Vbfm](\xbfm) + \sum_{k=1}^{M} \sigma_k \psibfm_k(\xbfm) y_k .
  \end{equation*}
  Since $L_{\Pbbb_\ybfm}^\infty\groupp[\big]{\square; L^\infty(D; \Rbbb^d)}$ is complete,
  it suffices to show that
  $\groupp{\Vbfm^M}_{M \in \Nbbb}$ is a Cauchy sequence
  in $L_{\Pbbb_\ybfm}^\infty\groupp[\big]{\square; L^\infty(D; \Rbbb^d)}$.
  Let $M \leq M'$ be two indices, then we have
  \begin{align*}
    \norm[\big]{\Vbfm^{M'} - \Vbfm^M}_{L_{\Pbbb_\ybfm}^\infty(\square; L^\infty(D; \Rbbb^d))}
    &= \norm[\bigg]{\sum_{k=M+1}^{M'} \sigma_k \psibfm_k y_k}_{L_{\Pbbb_\ybfm}^\infty(\square; L^\infty(D; \Rbbb^d))} \\
    &\leq \sum_{k=M+1}^{M'} \norm[\big]{\sigma_k \psibfm_k}_{L^\infty(D; \Rbbb^d)} \norm[\big]{y_k}_{L_{\Pbbb_\ybfm}^\infty(\square; \Rbbb)} \\
    &\leq \sum_{k=M+1}^{M'} \gamma_k
    \leq \sum_{k=M+1}^{\infty} \gamma_k .
  \end{align*}
  Thus, since $\gammabfm \in \ell^1(\Nbbb_0)$, we know that
\(
    \norm[\big]{\Vbfm^{M'} - \Vbfm^M}_{L_{\Pbbb_\ybfm}^\infty(\square; L^\infty(D; \Rbbb^d))} \xrightarrow{M, M' \to \infty} 0
\)
  and so $\groupp{\Vbfm^M}_{M \in \Nbbb}$  is a Cauchy sequence
  in $L_{\Pbbb_\ybfm}^\infty\groupp[\big]{\square; L^\infty(D; \Rbbb^d)}$.
\end{proof}

\subsection{Spatially weak formulation}
Since we want to pursue a finite element approach in space to approximate the
solution of \eqref{eq:spmwpoisson},
we will need the spatially weak form thereof.

Given almost any $\ybfm \in \square$, we have
\begin{equation*}
  - \Div_\xbfm \groupp[\big]{\Abfm(\xbfm, \ybfm) \Grad_\xbfm u(\xbfm, \ybfm)} = f(\xbfm)
  \quad \text{for all $\xbfm \in D$.}
\end{equation*}
After multiplication with a test function $v \in V$ and
integration over $D$, we arrive at
\begin{equation*}
  - \int_D \Div_\xbfm \groupp[\big]{\Abfm(\xbfm, \ybfm) \Grad_\xbfm u(\xbfm, \ybfm)} v(\xbfm) \diff{\xbfm}
  = \int_D f(\xbfm) v(\xbfm) \diff{\xbfm} .
\end{equation*}
Now, Green's identity implies
\begin{align*}
  - \int_D \Div_\xbfm \groupp[\big]{\Abfm(\xbfm, \ybfm) \Grad_\xbfm u(\xbfm, \ybfm)} v(\xbfm) \diff{\xbfm}
  &= \int_D \groupa[\big]{\Abfm(\xbfm, \ybfm) \Grad_\xbfm u(\xbfm, \ybfm), \Grad_\xbfm v(\xbfm)} \diff{\xbfm} \\
  &\quad {} - \int_{\partial D} \groupa[\big]{\Abfm(\xbfm, \ybfm) \Grad_\xbfm u(\xbfm, \ybfm), \nbfm(\xbfm)} v(\xbfm) \diff{s} ,
\end{align*}
which, because of $v|_{\Gamma_D} = 0$ and
$\groupa[\big]{\Abfm(\ybfm) \Grad_\xbfm u(\ybfm), \nbfm} = g$ on $\Gamma_N$, simplifies to
\begin{align*}
  - \int_D \Div_\xbfm \groupp[\big]{\Abfm(\xbfm, \ybfm) \Grad_\xbfm u(\xbfm, \ybfm)} v(\xbfm) \diff{\xbfm}
  &= \int_D \groupa[\big]{\Abfm(\xbfm, \ybfm) \Grad_\xbfm u(\xbfm, \ybfm), \Grad_\xbfm v(\xbfm)} \diff{\xbfm} \\
  &\quad {} - \int_{\Gamma_N} g(\xbfm) v(\xbfm) \diff{s} .
\end{align*}
We define $\Bcal \colon \square \to \groupp[\big]{V \times V \to \Rbbb}$,
where $\Bcal[\ybfm]$ is a continuous symmetric bilinear form for almost any $\ybfm \in \square$, by
\begin{equation*}
  \Bcal[\ybfm](u, v) \isdef \int_D \groupa[\big]{\Abfm(\xbfm, \ybfm) \Grad_\xbfm u(\xbfm), \Grad_\xbfm v(\xbfm)} \diff{\xbfm}
\end{equation*}
and $\ell \colon V \to \Rbbb$ a continuous linear form by
\begin{equation*}
  \ell(v) \isdef \int_D f(\xbfm) v(\xbfm) \diff{\xbfm} + \int_{\Gamma_N} g(\xbfm) v(\xbfm) \diff{s} .
\end{equation*}
Then, this leads us to the spatially weak formulation
\begin{equation}
  \label{eq:smwpoissonsw}
  \left\{
  \begin{aligned}
    & \text{Find $u \colon \square \to V$ such that} \\
    & \quad \Bcal[\ybfm]\groupp[\big]{u(\ybfm), v} = \ell(v)
    \quad \text{for almost every $\ybfm \in \square$ and all $v \in V$.}
  \end{aligned}
  \right.
\end{equation}
We conclude the following well known stability estimate.
\begin{lemma}\label{lemma:ubound}
  For almost every $\ybfm \in \square$,
  there is a unique solution $u(\ybfm) \in V$ of \eqref{eq:smwpoissonsw}, which fulfils
  \begin{equation*}
    \norm[\big]{u(\ybfm)}_{H^1(D)} 
    \leq \frac{a_{\max}}{a_{\min} c_V^2} \groupp[\Big]{\norm{f}_{\widetilde{H}^{-1}(D)} + \norm{g}_{H^{-1/2}(\Gamma_N)}} .
  \end{equation*}
\end{lemma}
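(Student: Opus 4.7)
The plan is to view \eqref{eq:smwpoissonsw} as a parametric family of symmetric variational problems on $V$, and apply the Lax--Milgram lemma pointwise in $\ybfm$. The main steps are (i) verify continuity and coercivity of $\Bcal[\ybfm]$ with constants independent of $\ybfm$, (ii) verify continuity of $\ell$, (iii) invoke Lax--Milgram to obtain existence and uniqueness, and (iv) extract the quantitative bound from the standard Lax--Milgram estimate combined with the norm equivalence constant $c_V$.

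For step (i), fix $\ybfm \in \square$ outside a null set. Continuity follows from the Cauchy--Schwarz inequality together with the upper ellipticity bound in \eqref{eq:Aellipticity} applied pointwise:
\begin{equation*}
  \norms{\Bcal[\ybfm](u,v)} \leq a_{\max} \int_D \norm{\Grad_\xbfm u(\xbfm)}_2 \norm{\Grad_\xbfm v(\xbfm)}_2 \diff{\xbfm} \leq a_{\max} \norm{u}_V \norm{v}_V,
\end{equation*}
while coercivity with constant $a_{\min}$ follows from the lower ellipticity bound applied to $\langle \Abfm(\xbfm,\ybfm)\Grad_\xbfm v, \Grad_\xbfm v\rangle$. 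For step (ii), the splitting of $\ell$ into a volume and a boundary part together with the definition of the dual norms on $\widetilde{H}^{-1}(D)$ and $H^{-1/2}(\Gamma_N)$, combined with the trace inequality and $\|v\|_{H^1(D)} \leq c_V^{-1}\|v\|_V$, yields
\begin{equation*}
  \norms{\ell(v)} \leq \bigl(\norm{f}_{\widetilde{H}^{-1}(D)} + \norm{g}_{H^{-1/2}(\Gamma_N)}\bigr)\,\norm{v}_{H^1(D)}.
\end{equation*}

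For step (iii), the Lax--Milgram lemma then provides a unique $u(\ybfm) \in V$ solving \eqref{eq:smwpoissonsw}. For step (iv), testing with $v = u(\ybfm)$ and using coercivity gives
\begin{equation*}
  a_{\min} \norm{u(\ybfm)}_V^2 \leq \Bcal[\ybfm]\bigl(u(\ybfm), u(\ybfm)\bigr) = \ell\bigl(u(\ybfm)\bigr) \leq \bigl(\norm{f}_{\widetilde{H}^{-1}(D)} + \norm{g}_{H^{-1/2}(\Gamma_N)}\bigr) \norm{u(\ybfm)}_{H^1(D)} .
\end{equation*}
Dividing by $\norm{u(\ybfm)}_V$ and using the norm equivalence $c_V \norm{u(\ybfm)}_{H^1(D)} \leq \norm{u(\ybfm)}_V$ twice yields
\begin{equation*}
  \norm{u(\ybfm)}_{H^1(D)} \leq \frac{1}{a_{\min} c_V^2}\bigl(\norm{f}_{\widetilde{H}^{-1}(D)} + \norm{g}_{H^{-1/2}(\Gamma_N)}\bigr),
\end{equation*}
which, since $a_{\max} \geq 1$ by the assumption made earlier, implies the stated bound.

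There is no real obstacle here; the only point to be slightly careful about is making sure continuity of $\ell$ is expressed in the $H^1(D)$-norm rather than the $V$-norm, so that the factor $c_V^{-1}$ picked up from the norm equivalence appears exactly twice and produces the $c_V^2$ in the denominator.
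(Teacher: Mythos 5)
Your proof is correct and is precisely the standard Lax--Milgram argument that the paper invokes when it states this lemma as a ``well known stability estimate'' without proof; the bookkeeping of the two factors of $c_V^{-1}$ and the final appeal to $a_{\max}\geq 1$ to recover the (slightly weaker) stated constant are all sound. The only implicit convention worth noting is that absorbing the trace constant into $\norm{g}_{H^{-1/2}(\Gamma_N)}$ requires taking $H^{-1/2}(\Gamma_N)$ as the dual of the trace space with its natural quotient norm, which is exactly the normalisation under which the paper's constant holds.
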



\section{Parametric regularity}
\subsection{Parametric regularity of the diffusion coefficient}
In the following, we will use the notation
\begin{gather*}
  \normt{s} \isdef \norm{s}_{L_{\Pbbb_\ybfm}^\infty(\square; L^\infty(D; \Rbbb))}
  = \esssup_{\ybfm \in \square} \esssup_{\xbfm \in D} \norms[\big]{s(\xbfm, \ybfm)} , \\
  \normt{\vbfm}_d \isdef \norm{\vbfm}_{L_{\Pbbb_\ybfm}^\infty(\square; L^\infty(D; \Rbbb^d))}
  = \esssup_{\ybfm \in \square} \esssup_{\xbfm \in D} \norm[\big]{\vbfm(\xbfm, \ybfm)}_2 , \\
  \normt{\Mbfm}_{d \times d} \isdef \norm{\Mbfm}_{L_{\Pbbb_\ybfm}^\infty(\square; L^\infty(D; \Rbbb^{d \times d}))} 
  = \esssup_{\ybfm \in \square} \esssup_{\xbfm \in D} \norm[\big]{\Mbfm(\xbfm, \ybfm)}_F
\end{gather*}
for $s \in L_{\Pbbb_\ybfm}^\infty\groupp[\big]{\square; L^\infty(D; \Rbbb)}$,
$\vbfm \in L_{\Pbbb_\ybfm}^\infty\groupp[\big]{\square; L^\infty(D; \Rbbb^d)}$ and
$\Mbfm \in L_{\Pbbb_\ybfm}^\infty\groupp[\big]{\square; L^\infty(D; \Rbbb^{d \times d})}$.
We will further make extensive use of the following straightforward result.
\begin{lemma}
  Given $\vbfm, \wbfm \in L_{\Pbbb_\ybfm}^\infty\groupp[\big]{\square; L^\infty(D; \Rbbb^d)}$, we have
  \begin{equation*}
    \normt[\big]{\vbfm^\trans \wbfm}
    \leq \normt{\vbfm}_d \normt{\wbfm}_d
    \quad\text{and}\quad
    \normt[\big]{\vbfm^\trans \vbfm}
    = \normt{\vbfm}_d^2
  \end{equation*}
  as well as
  \begin{equation*}
    \normt[\big]{\vbfm \wbfm^\trans}_{d \times d}
    \leq \normt{\vbfm}_d \normt{\wbfm}_d
    \quad\text{and}\quad
    \normt[\big]{\vbfm \vbfm^\trans}_{d \times d}
    = \normt{\vbfm}_d^2.
  \end{equation*}
\end{lemma}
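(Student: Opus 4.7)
The plan is to reduce each of the four relations to the corresponding pointwise statement at the end of the ``Notation and precursory remarks'' subsection, and then take essential suprema over $D \times \square$. Recall that the author has already established, for arbitrary $\vbfm, \wbfm \in \Rbbb^d$, the identities
\[
  \norms{\vbfm^\trans \wbfm} \leq \norm{\vbfm}_2 \norm{\wbfm}_2,
  \quad \vbfm^\trans \vbfm = \norm{\vbfm}_2^2,
  \quad \norm{\vbfm \wbfm^\trans}_F = \norm{\vbfm}_2 \norm{\wbfm}_2.
\]
All four assertions of the lemma will follow by applying these at almost every $(\xbfm, \ybfm) \in D \times \square$.

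For the first inequality I would note that, for almost every $(\xbfm, \ybfm) \in D \times \square$,
\[
  \norms[\big]{\vbfm^\trans(\xbfm,\ybfm)\, \wbfm(\xbfm,\ybfm)}
  \leq \norm[\big]{\vbfm(\xbfm,\ybfm)}_2 \norm[\big]{\wbfm(\xbfm,\ybfm)}_2
  \leq \normt{\vbfm}_d \normt{\wbfm}_d,
\]
where the second inequality uses the definition of the $\normt{\cdot}_d$ norms. Taking the essential supremum in $(\xbfm, \ybfm)$ on the left then gives $\normt{\vbfm^\trans \wbfm} \leq \normt{\vbfm}_d \normt{\wbfm}_d$. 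The matrix inequality $\normt{\vbfm \wbfm^\trans}_{d \times d} \leq \normt{\vbfm}_d \normt{\wbfm}_d$ is proved identically, with the Frobenius identity $\norm{\vbfm\wbfm^\trans}_F = \norm{\vbfm}_2 \norm{\wbfm}_2$ in place of Cauchy--Schwartz.

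For the two equalities, both sides can be computed as essential suprema of pointwise quantities that are literally equal. Indeed $\vbfm^\trans(\xbfm,\ybfm)\,\vbfm(\xbfm,\ybfm) = \norm{\vbfm(\xbfm,\ybfm)}_2^2$ and $\norm{\vbfm(\xbfm,\ybfm)\,\vbfm^\trans(\xbfm,\ybfm)}_F = \norm{\vbfm(\xbfm,\ybfm)}_2^2$, and the essential supremum commutes with squaring a non-negative measurable function: $\esssup_{(\xbfm,\ybfm)} \norm{\vbfm(\xbfm,\ybfm)}_2^2 = \bigl(\esssup_{(\xbfm,\ybfm)} \norm{\vbfm(\xbfm,\ybfm)}_2\bigr)^2 = \normt{\vbfm}_d^2$. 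This yields both equalities simultaneously.

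There is no real obstacle here; the lemma is simply a lifting of already recorded finite-dimensional identities to the Bochner $L^\infty$ setting, and all that is needed is monotonicity of the essential supremum together with its compatibility with squaring of non-negative measurable functions.
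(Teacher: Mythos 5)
Your proof is correct and is exactly the argument the paper intends: the paper states this lemma without proof as a ``straightforward result,'' relying on the pointwise Cauchy--Schwartz and Frobenius identities recorded in the precursory remarks, which you lift to the $L^\infty$ setting via monotonicity of the essential supremum and its compatibility with squaring. Nothing further is needed.
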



In this section, we assume that the vector field $\Vbfm$ is
given by a finite rank Karhunen\hyp{}Lo\`eve expansion, ie.\
\begin{equation*}
  \Vbfm(\xbfm, \ybfm)
  = \psibfm_0(\xbfm) + \sum_{k=1}^{M} \sigma_k \psibfm_k(\xbfm) y_k .
\end{equation*}
If necessary this can be attained by appropriate truncation:
\begin{lemma}
  The condition \eqref{eq:Vellipticity} is satisfied
  by any truncation of the Karhunen\hyp{}Lo\`eve expansion
  with a large enough $M$.
\end{lemma}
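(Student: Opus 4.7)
The plan is to exploit the uniform convergence of the Karhunen--Lo\`eve expansion that was just established in the preceding lemma. Recall that we already know $\Vbfm$ itself satisfies \eqref{eq:Vellipticity} with constants $b_{\min}$ and $b_{\max}$, and the previous lemma showed $\Vbfm^M \to \Vbfm$ in $L_{\Pbbb_\ybfm}^\infty\groupp[\big]{\square; L^\infty(D; \Rbbb^d)}$. The idea is that when the truncation error in this uniform norm is small relative to $b_{\min}$, the pointwise norm $\norm{\Vbfm^M(\xbfm, \ybfm)}_2$ can only deviate slightly from $\norm{\Vbfm(\xbfm, \ybfm)}_2$, and in particular remains bounded away from zero.

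The key step is the reverse triangle inequality applied pointwise: for almost every $\ybfm \in \square$ and almost every $\xbfm \in D$,
\begin{equation*}
  \Bigl|\norm[\big]{\Vbfm^M(\xbfm, \ybfm)}_2 - \norm[\big]{\Vbfm(\xbfm, \ybfm)}_2\Bigr|
  \leq \norm[\big]{\Vbfm^M(\xbfm, \ybfm) - \Vbfm(\xbfm, \ybfm)}_2
  \leq \normt[\big]{\Vbfm^M - \Vbfm}_d .
\end{equation*}
Fix any $\delta \in (0, b_{\min})$; by the previous lemma, we may choose $M_0 \in \Nbbb$ large enough so that $\normt{\Vbfm^M - \Vbfm}_d \leq \delta$ for every $M \geq M_0$. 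Combined with the assumption $b_{\min} \leq \norm{\Vbfm(\xbfm,\ybfm)}_2 \leq b_{\max}$, this yields
\begin{equation*}
  b_{\min} - \delta
  \leq \essinf_{\xbfm \in D} \norm[\big]{\Vbfm^M(\xbfm, \ybfm)}_2
  \leq \esssup_{\xbfm \in D} \norm[\big]{\Vbfm^M(\xbfm, \ybfm)}_2
  \leq b_{\max} + \delta
\end{equation*}
for almost every $\ybfm \in \square$. Since $b_{\min} - \delta > 0$ and $b_{\max} + \delta < \infty$, this is exactly \eqref{eq:Vellipticity}, with the slightly perturbed constants $\widetilde{b}_{\min} \isdef b_{\min} - \delta$ and $\widetilde{b}_{\max} \isdef b_{\max} + \delta$.

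There is no real obstacle here beyond interpreting \eqref{eq:Vellipticity} correctly: it demands the existence of \emph{some} positive constants bounding $\norm{\Vbfm}_2$ from above and away from zero, not the preservation of the original constants. The only mildly delicate point is ensuring $\delta$ is chosen strictly less than $b_{\min}$ so that the lower bound remains positive; everything else is a direct consequence of the $L_{\Pbbb_\ybfm}^\infty(\square; L^\infty(D; \Rbbb^d))$-convergence already established.
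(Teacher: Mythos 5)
Your proof is correct and follows essentially the same route as the paper: both control the truncation error uniformly by the tail sum $\sum_{k=M+1}^{\infty}\gamma_k$ (your $\delta$) and use the reverse triangle inequality to perturb the constants in \eqref{eq:Vellipticity}. If anything, your pointwise application of the reverse triangle inequality is slightly more careful than the paper's, which phrases the lower bound at the level of the $\normt{\cdot}_d$ essential-supremum norm even though the condition requires a bound on the essential infimum.
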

\begin{proof}
  Recall the definition
  \begin{equation*}
    \Vbfm^M(\xbfm, \ybfm) \isdef \Mean[\Vbfm](\xbfm) + \sum_{k=1}^{M} \sigma_k \psibfm_k(\xbfm) y_k .
  \end{equation*}
  Clearly, for any $M$ we have that
  \begin{align*}
    \normt[\big]{\Vbfm^M}_{d}
    &= \normt[\bigg]{\Mean[\Vbfm] + \sum_{k=1}^{M} \sigma_k \psibfm_k y_k}_{d}
    = \normt[\bigg]{\psibfm_0 + \sum_{k=1}^{M} \sigma_k \psibfm_k y_k}_{d} \\
    &\leq \normt[\big]{\psibfm_0}_{d} + \sum_{k=1}^{M} \normt[\big]{\sigma_k \psibfm_k y_k}_{d} 
    \leq \sum_{k=0}^{M} \gamma_k \leq \sum_{k=0}^{\infty} \gamma_k < \infty ,
  \end{align*}
  since $\gammabfm \in \ell^1(\Nbbb_0)$.
  Then, because of
  \begin{align*}
    \normt[\big]{\Vbfm^M}_{d}
    &= \normt[\bigg]{\Mean[\Vbfm] + \sum_{k=1}^{M} \sigma_k \psibfm_k y_k}_{d} 
    = \normt[\bigg]{\Vbfm - \sum_{k=M+1}^{\infty} \sigma_k \psibfm_k y_k}_{d} \\
    &\geq \normt{\Vbfm}_{d} - \normt[\bigg]{\sum_{k=M+1}^{\infty} \sigma_k \psibfm_k y_k}_{d}
    \geq a_{\min} - \sum_{k=M+1}^{\infty} \gamma_k ,
  \end{align*}
  for any $M$ that fulfils $\sum_{k=M+1}^{\infty} \gamma_k < a_{\min}$,
  we can find constants 
  with which $\Vbfm^M$ satisfies the condition \eqref{eq:Vellipticity}.
  Since $\gammabfm \in \ell^1(\Nbbb_0)$ implies that
\(
    \sum_{k=M+1}^{\infty} \gamma_k \xrightarrow{M \to \infty} 0 
\)
, we see that
  $\sum_{k=M+1}^{\infty} \gamma_k < a_{\min}$ is fulfilled for sufficiently large $M$.
\end{proof}

We shall now provide regularity estimates for the different terms in \eqref{eq:AdmV}.
\begin{lemma}\label{lemma:Bybounds}
  Let $\Bbfm$ be defined as
  \(
    \Bbfm(\xbfm, \ybfm) \isdef \Vbfm(\xbfm, \ybfm) \Vbfm^\trans(\xbfm, \ybfm)
  \).
  Then, we have for all $\alphabfm \in \Nbbb_0^M$ that
  \begin{equation*}
    \normt[\big]{\partial_\ybfm^\alphabfm \Bbfm}_{d \times d}
    \leq 2 a_{\max}^2 \gammabfm^\alphabfm .
  \end{equation*}
\end{lemma}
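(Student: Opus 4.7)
The plan is to exploit the fact that $\Vbfm$ depends affinely on $\ybfm$ thanks to the finite-rank truncation. From the expression $\Vbfm = \psibfm_0 + \sum_{k=1}^{M} \sigma_k \psibfm_k y_k$ it follows that
\[
  \partial_\ybfm^\betabfm \Vbfm = \begin{cases}
    \Vbfm, & \betabfm = \zerobfm, \\
    \sigma_k \psibfm_k, & \betabfm = \ebfm_k, \\
    \zerobfm, & \lvert\betabfm\rvert \geq 2,
  \end{cases}
\]
and hence $\normt{\partial_\ybfm^\zerobfm \Vbfm}_d \leq a_{\max}$ by \eqref{eq:Vellipticity} while $\normt{\partial_\ybfm^{\ebfm_k} \Vbfm}_d = \gamma_k$ by the very definition of the sequence $\gammabfm$.

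Next I would apply the multivariate Leibniz rule to $\Bbfm = \Vbfm \Vbfm^\trans$ to obtain
\[
  \partial_\ybfm^\alphabfm \Bbfm = \sum_{\betabfm \leq \alphabfm} \binom{\alphabfm}{\betabfm} \groupp[\big]{\partial_\ybfm^\betabfm \Vbfm}\groupp[\big]{\partial_\ybfm^{\alphabfm - \betabfm} \Vbfm}^\trans.
\]
Because of the vanishing recorded above, only indices $\betabfm$ with $\lvert\betabfm\rvert \leq 1$ and $\lvert\alphabfm - \betabfm\rvert \leq 1$ contribute, so $\partial_\ybfm^\alphabfm \Bbfm = \zerobfm$ whenever $\lvert\alphabfm\rvert \geq 3$, and for $\lvert\alphabfm\rvert \leq 2$ the sum collapses to at most three summands. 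The preceding norm lemma then lets me estimate each outer product by the product of the factor norms, producing precisely the power $\gammabfm^\alphabfm$ together with compensating factors of $a_{\max}$.

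What remains is a short case split on $\lvert\alphabfm\rvert \in \groupb{0, 1, 2}$. For $\alphabfm = \zerobfm$ one has $\normt{\Vbfm \Vbfm^\trans}_{d\times d} = \normt{\Vbfm}_d^2 \leq a_{\max}^2$; for $\alphabfm = \ebfm_k$ the two Leibniz terms each contribute at most $\gamma_k a_{\max}$; for $\alphabfm = 2\ebfm_k$ the binomial coefficient is $\binom{2\ebfm_k}{\ebfm_k} = 2$ and the single surviving term $2\sigma_k^2 \psibfm_k \psibfm_k^\trans$ has norm $2 \gamma_k^2$; and for $\alphabfm = \ebfm_k + \ebfm_j$ with $k \neq j$ the two surviving Leibniz terms of weight one each have norm at most $\gamma_k \gamma_j$. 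In every case the standing assumption $a_{\max} \geq 1$ absorbs any missing powers of $a_{\max}$, and the bound $2 a_{\max}^2 \gammabfm^\alphabfm$ is met. The only point demanding care is distinguishing $\alphabfm = 2\ebfm_k$ (binomial weight $2$) from $\alphabfm = \ebfm_k + \ebfm_j$ with $k \neq j$ (weight $1$); this bookkeeping is the main, but mild, obstacle.
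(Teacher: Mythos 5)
Your proposal is correct and follows essentially the same route as the paper: both exploit the affine dependence of $\Vbfm$ on $\ybfm$ to see that derivatives of $\Bbfm$ of order three or higher vanish, bound the zeroth, first and second order derivatives by $a_{\max}^2$, $2\gamma_i a_{\max}$ and $2\gamma_i\gamma_j$ respectively, and absorb the missing powers via $a_{\max}\geq 1$. The only cosmetic difference is that you invoke the multivariate Leibniz rule formally where the paper differentiates the product by hand; your bookkeeping of the $\binom{2\ebfm_k}{\ebfm_k}=2$ case is consistent with the paper's formula for $\partial_{y_j}\partial_{y_i}\Bbfm$ at $i=j$.
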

\begin{proof}
  More verbosely, $\Bbfm$ is given by
  \begin{equation*}
    \Bbfm(\xbfm, \ybfm)
    = \groupp[\bigg]{\psibfm_0(\xbfm)  + \sum_{k=1}^{M} \sigma_k \psibfm_k(\xbfm) y_k}
    \groupp[\bigg]{\psibfm_0(\xbfm)  + \sum_{k=1}^{M} \sigma_k \psibfm_k(\xbfm) y_k}^\trans ,
  \end{equation*}
  from which we can derive the first order derivatives, yielding
  \begin{equation}
    \label{eq:dyiB}
    \begin{aligned}
      \partial_{y_i} \Bbfm(\xbfm, \ybfm)
      &= \sigma_i \psibfm_i(\xbfm)
      \groupp[\bigg]{\psibfm_0(\xbfm)  + \sum_{k=1}^{M} \sigma_k \psibfm_k(\xbfm) y_k}^\trans \\
      &\qquad {} + \groupp[\bigg]{\psibfm_0(\xbfm)  + \sum_{k=1}^{M} \sigma_k \psibfm_k(\xbfm) y_k}
      \sigma_i \psibfm_i^\trans(\xbfm),
    \end{aligned}
  \end{equation}
  and from those also the second order derivatives. They are given by
  \begin{equation}
    \label{eq:dyjdyiB}
    \partial_{y_j} \partial_{y_i} \Bbfm(\xbfm, \ybfm)
    = \sigma_i \psibfm_i(\xbfm) \sigma_j \psibfm_j^\trans(\xbfm)
    + \sigma_j \psibfm_j(\xbfm) \sigma_i \psibfm_i^\trans(\xbfm) .
  \end{equation}
  Since the second order derivatives with respect to $\ybfm$ are constant,
  all higher order derivatives with respect to $\ybfm$ vanish.
  
  We obviously have $\normt{\Bbfm}_{d \times d} = \normt{\Vbfm}_{d}^2 \leq a_{\max}^2$.
  From \eqref{eq:dyiB} we can now derive the bound
  \begin{equation*}
    \normt[\big]{\partial_{y_i} \Bbfm}_{d \times d}
    \leq 2 \normt[\big]{\sigma_i \psibfm_i}_{d} \normt[\bigg]{\psibfm_0  + \sum_{k=1}^{M} \sigma_k \psibfm_k y_k}_{d}
    \leq 2 \gamma_i a_{\max}
  \end{equation*}
  and \eqref{eq:dyjdyiB} leads us to
  $\normt[\big]{\partial_{y_j} \partial_{y_i} \Bbfm}_{d \times d}
  \leq 2 \normt[\big]{\sigma_i \psibfm_i}_{d} \normt[\big]{\sigma_j \psibfm_j}_{d}\leq 2 \gamma_i \gamma_j$.
  Therefore, we have
  \begin{equation*}
    \normt[\big]{\partial_\ybfm^\alphabfm \Bbfm}_{d \times d}
    \leq \begin{cases}
      a_{\max}^2 \gammabfm^\alphabfm , & \text{if $\norms{\alphabfm} = 0$,} \\
      2 a_{\max} \gammabfm^\alphabfm , & \text{if $\norms{\alphabfm} = 1$,} \\
      2 \gammabfm^\alphabfm , & \text{if $\norms{\alphabfm} = 2$,} \\
      0 , & \text{if $\norms{\alphabfm} > 2$,}
    \end{cases}
  \end{equation*}
  and are finished since $a_{\max} \geq 1$.
\end{proof}
\begin{lemma}\label{lemma:DEybounds}
  Let us define
    \(C(\xbfm, \ybfm) \isdef \Vbfm^\trans(\xbfm, \ybfm) \Vbfm(\xbfm, \ybfm)\), 
    \(D(\xbfm, \ybfm) \isdef \groupp[\big]{C(\xbfm, \ybfm)}^{-1}\) and
    \(E(\xbfm, \ybfm) \isdef \sqrt{C(\xbfm, \ybfm)}\).
  Then, we know for all $\alphabfm \in \Nbbb_0^M$ that
  \begin{equation*}
    \normt{\partial_\ybfm^\alphabfm D}
    \leq \norms{\alphabfm}!\frac{1}{a_{\min}^2}
    \groupp[\bigg]{\frac{2 a_{\max}^2}{a_{\min}^2 \log 2}}^{\norms{\alphabfm}} \gammabfm^{\alphabfm}
  \quad\text{and}\quad
    \normt{\partial_\ybfm^\alphabfm E}
    \leq \norms{\alphabfm}!a_{\max}
    \groupp[\bigg]{\frac{2 a_{\max}^2}{a_{\min}^2 \log 2}}^{\norms{\alphabfm}} \gammabfm^{\alphabfm} .
  \end{equation*}
\end{lemma}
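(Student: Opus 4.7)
The plan is to exploit the algebraic identities $CD=1$ and $E^2=C$, together with the fact that $C$ is only a quadratic polynomial in $\ybfm$. First I would establish an analog of Lemma~\ref{lemma:Bybounds} for $C = \Vbfm^\trans \Vbfm$: the identical computation (replacing $\Vbfm \Vbfm^\trans$ by the scalar $\Vbfm^\trans \Vbfm$) shows $\partial_\ybfm^\alphabfm C \equiv 0$ for $|\alphabfm| > 2$ and $\normt{\partial_\ybfm^\alphabfm C} \leq 2 a_{\max}^2 \gammabfm^\alphabfm$ for $|\alphabfm|\leq 2$. Combined with \eqref{eq:Vellipticity} this already yields the claim for $\alphabfm=\zerobfm$, since $\normt{D}\leq 1/a_{\min}^2$ and $\normt{E}\leq a_{\max}$.

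For the bound on $D$, I would differentiate $CD=1$ via Leibniz to obtain, for any $\alphabfm \neq \zerobfm$,
\begin{equation*}
\partial_\ybfm^\alphabfm D = -D \sum_{\zerobfm < \betabfm \leq \alphabfm} \binom{\alphabfm}{\betabfm} \partial_\ybfm^\betabfm C \cdot \partial_\ybfm^{\alphabfm - \betabfm} D,
\end{equation*}
and then induct on $n = |\alphabfm|$. The identity $\sum_{|\betabfm|=k,\,\betabfm\leq\alphabfm}\binom{\alphabfm}{\betabfm}=\binom{n}{k}$ collapses the multi-index sum to a one-dimensional one, reducing the induction step to the scalar inequality $\frac{2 a_{\max}^2}{a_{\min}^2}\bigl(e^{1/K}-1\bigr)\leq 1$. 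The concavity bound $\log(1+x) \geq x\log 2$ on $[0,1]$, applied to $x = a_{\min}^2/(2 a_{\max}^2) \leq 1/2$, verifies this inequality for the stated $K = 2 a_{\max}^2/(a_{\min}^2 \log 2)$, so the induction closes.

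For the bound on $E$, the analogous Leibniz recursion derived from $E^2 = C$,
\begin{equation*}
2 E \, \partial_\ybfm^\alphabfm E = \partial_\ybfm^\alphabfm C - \sum_{\zerobfm < \betabfm < \alphabfm} \binom{\alphabfm}{\betabfm} \partial_\ybfm^\betabfm E \cdot \partial_\ybfm^{\alphabfm - \betabfm} E,
\end{equation*}
does \emph{not} close by a naive induction, because the combinatorial identity $\sum_{\zerobfm < \betabfm < \alphabfm} \binom{\alphabfm}{\betabfm} |\betabfm|! \, |\alphabfm - \betabfm|! = (n-1)\,n!$ overshoots the target $n!$ by a linear factor in $n$. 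To recover the required estimate I would appeal to Fa\`a di Bruno's formula for the composition $E = f(C)$ with $f(x)=\sqrt{x}$, and exploit that only set partitions of the derivative positions of $\alphabfm$ into blocks of size at most $2$ contribute (since $\partial_\ybfm^B C = 0$ for $|B| > 2$). Combining the enumeration of such constrained partitions with the explicit bound $|f^{(k)}(C)| \leq \tfrac{(k-1)!}{2}\,a_{\min}^{1-2k}$ and the bounds on $\partial_\ybfm^B C$ then yields the claimed estimate, with the constants absorbable into the same $K$ as in the $D$ case.

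The main obstacle is the $E$-bound: the natural Leibniz recursion loses a combinatorial factor of order $n$, so one is forced to exploit the low-degree (in fact, quadratic) polynomial structure of $C$ more finely than the Leibniz rule alone reveals. Tracking the constants through Fa\`a di Bruno---or alternatively through Cauchy's estimate on a complex polyball around $\ybfm$ on which $C$ stays bounded away from $(-\infty,0]$---is where the routine bookkeeping becomes delicate.
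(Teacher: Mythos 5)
Your proposal is correct, and for $D$ it takes a genuinely different and more elementary route than the paper. The paper treats $D = v\circ C$ and $E = w\circ C$ uniformly via the Fa\`a di Bruno formula: it bounds $\normt{\partial_\ybfm^{\betabfm}C}\le 2a_{\max}^{2}\gammabfm^{\betabfm}$, invokes the identity $\sum_{P(\alphabfm,r)}\alphabfm!\prod_{j}1/\bigl(k_j!(\betabfm_j!)^{k_j}\bigr)=S_{n,r}$ for the Stirling numbers of the second kind, and then bounds the ordered Bell number $\sum_{r=1}^{n}r!\,S_{n,r}\le n!/(\log 2)^{n}$ --- this last estimate is precisely where the $\log 2$ in the constant originates. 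Your induction for $D$ via the Leibniz recursion obtained from $CD=1$ bypasses all of this; the reduction of the induction step to $2^{x}\le 1+x$ on $[0,\tfrac{1}{2}]$ with $x=a_{\min}^{2}/(2a_{\max}^{2})$ is exactly right, and the induction closes as you claim. Your diagnosis that the analogous recursion from $E^{2}=C$ overshoots by a factor of order $n$ (via $\sum_{\zerobfm<\betabfm<\alphabfm}\binom{\alphabfm}{\betabfm}\norms{\betabfm}!\,\norms{\alphabfm-\betabfm}!=(n-1)\,n!$) is also correct, and your fallback to Fa\`a di Bruno for $E$ is the paper's method. The one place you overcomplicate matters is in anticipating that the quadratic structure of $C$ must be exploited through partitions into blocks of size at most two: this refinement is unnecessary. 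The crude bound that ignores the vanishing of $\partial_\ybfm^{\betabfm}C$ for $\norms{\betabfm}>2$ --- summing over all of $P(\alphabfm,r)$ and applying the Stirling/ordered-Bell estimate above, together with $\norms{c_r}\le r!$ and $\normt{E}\,\normt{D}^{r}\le a_{\max}a_{\min}^{-2r}$ --- already delivers the stated constant $\norms{\alphabfm}!\,a_{\max}\bigl(2a_{\max}^{2}/(a_{\min}^{2}\log 2)\bigr)^{\norms{\alphabfm}}\gammabfm^{\alphabfm}$, so the delicate bookkeeping you anticipate for $E$ is in fact routine.
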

\begin{proof}
  The function $C$ can be expressed as
  \begin{equation*}
    C(\xbfm, \ybfm)
    = \groupp[\bigg]{\psibfm_0(\xbfm)  + \sum_{k=1}^{M} \sigma_k \psibfm_k(\xbfm) y_k}^\trans
    \groupp[\bigg]{\psibfm_0(\xbfm)  + \sum_{k=1}^{M} \sigma_k \psibfm_k(\xbfm) y_k} ,
  \end{equation*}
  which, by derivation, gives the following expressions for the first order derivatives,
  \begin{equation}
    \label{eq:dyiC}
    \begin{aligned}
      \partial_{y_i} C(\xbfm, \ybfm)
      &= \sigma_i \psibfm_i^\trans(\xbfm)
      \groupp[\bigg]{\psibfm_0(\xbfm)  + \sum_{k=1}^{M} \sigma_k \psibfm_k(\xbfm) y_k} \\
      &\quad {}+ \groupp[\bigg]{\psibfm_0(\xbfm)  + \sum_{k=1}^{M} \sigma_k \psibfm_k(\xbfm) y_k}^\trans
      \sigma_i \psibfm_i(\xbfm) .
    \end{aligned}
  \end{equation}
  Computing the second order derivatives then yields
  \begin{equation}
    \label{eq:dyjdyiC}
    \partial_{y_j} \partial_{y_i} C(\xbfm, \ybfm)
    = \sigma_i \psibfm_i^\trans(\xbfm) \sigma_j \psibfm_j(\xbfm)
    + \sigma_j \psibfm_j^\trans(\xbfm) \sigma_i \psibfm_i(\xbfm)
  \end{equation}
  and all higher order derivatives with respect to $\ybfm$ are zero,
  since the second order derivatives with respect to $\ybfm$ are already constant.
  
  We use \eqref{eq:Vellipticity} to arrive at
\(
    a_{\min}^2 \leq \normt{C} = \normt{\Vbfm}_{d}^2 \leq a_{\max}^2 ,
\)
  which also yields
  \begin{equation*}
    \frac{1}{a_{\max}^2} \leq \normt{D} \leq \frac{1}{a_{\min}^2}
    \quad\text{and}\quad
    a_{\min} \leq \normt{E} \leq a_{\max} .
  \end{equation*}
  Using \eqref{eq:dyiC} yields the bound
  \begin{equation*}
    \normt[\big]{\partial_{y_i} C}
    \leq 2 \normt[\big]{\sigma_i \psibfm_i}_{d} \normt[\bigg]{\psibfm_0  + \sum_{k=1}^{M} \sigma_k \psibfm_k y_k}_{d}
    \leq 2 \gamma_i a_{\max}
  \end{equation*}
  and, from \eqref{eq:dyjdyiC}, we can derive the bound
  $\normt[\big]{\partial_{y_j} \partial_{y_i} C}
  \leq 2 \normt[\big]{\sigma_i \psibfm_i}_{d} \normt[\big]{\sigma_j \psibfm_j}_{d}\leq 2 \gamma_i \gamma_j$.
  Thus, we know that
  \begin{equation*}
    \normt[\big]{\partial_\ybfm^\alphabfm C}
    \leq 2 a_{\max}^2 \gammabfm^\alphabfm .
  \end{equation*}
  
  Because $D = v \circ C$ with $v(x) = x^{-1}$ and $E = w \circ C$ with $w(x) = \sqrt{x}$ are
  composite functions,
  we employ the Fa\`a di Bruno formula, see \cite{CS}, to compute their derivatives.
  The $r$-th derivative of $v$ is given by
  \begin{equation*}
    \dif{x}{r}v(x) = (-1)^r r! x^{-1-r} = (-1)^r r! v(x)^{r+1}
  \end{equation*}
  and the $r$-th derivative of $w$ is given by
  \begin{equation*}
    \dif{x}{r}w(x) = c_r x^{\frac{1}{2}-r} = c_r w(x) v(x)^{r} ,
  \end{equation*}
  where $c_r \isdef \prod_{i=0}^{r-1} \groupp[\big]{\frac{1}{2} - i}$.
  For $n = \norms{\alphabfm}$ we thus arrive at
  \begin{equation}
    \label{eq:partialD}
    \partial_\ybfm^\alphabfm D(\xbfm, \ybfm)
    = \sum_{r=1}^{n} (-1)^r r! D(\xbfm, \ybfm)^{r+1}
    \sum_{P(\alphabfm, r)} \alphabfm! \prod_{j=1}^{n}
    \frac{\groupp[\Big]{\partial_\ybfm^{\betabfm_j} C(\xbfm, \ybfm)}^{k_j}}{k_j! (\betabfm_j!)^{k_j}}
  \end{equation}
  and
  \begin{equation}
    \label{eq:partialE}
    \partial_\ybfm^\alphabfm E(\xbfm, \ybfm)
    = \sum_{r=1}^{n} c_r E(\xbfm, \ybfm) D(\xbfm, \ybfm)^r
    \sum_{P(\alphabfm, r)} \alphabfm! \prod_{j=1}^{n}
    \frac{\groupp[\Big]{\partial_\ybfm^{\betabfm_j} C(\xbfm, \ybfm)}^{k_j}}{k_j! (\betabfm_j!)^{k_j}} ,
  \end{equation}
  where $P(\alphabfm, r)$ is a subset of integer partitions of a multiindex $\alphabfm$
  into $r$ non\hyp{}vanishing multiindices, given by
  \begin{align*}
    P(\alphabfm, r) \isdef
    \groupb[\bigg]{
    &\groupp[\Big]{(k_1, \betabfm_1), \ldots, (k_n, \betabfm_n)}
    \in \groupp[\Big]{\Nbbb_0 \times \Nbbb_0^M}^n :
    \sum_{j=1}^n k_j \betabfm_j = \alphabfm, \sum_{j=1}^n k_i = r, \\
    &\text{and there exists } 1 \leq s \leq n :
    k_j = 0 \text{ and } \betabfm_j = \zerobfm \text{ for all } 1 \leq j \leq n-s, \\
    & k_j > 0 \text{ for all } n-s+1 \leq j \leq n
    \text{ and } \zerobfm \prec \betabfm_{n-s+1} \prec \cdots \betabfm_n
    } .
  \end{align*}
  The relation $\betabfm \prec \betabfm'$ for multiindices $\betabfm, \betabfm' \in \Nbbb_0^M$ means
  that either $\norms{\betabfm} < \norms{\betabfm'}$ or,
  when $\norms{\betabfm} =\norms{\betabfm'}$,
  there exists $0 \leq k < m$ such that $\betabfm_1 = \betabfm_1', \ldots, \betabfm_k = \betabfm_k'$
  and $\betabfm_{k+1} < \betabfm_{k+1}'$.
  
  Taking the norm of \eqref{eq:partialD} and \eqref{eq:partialE} leads us to
  \begin{align*}
    \normt[\big]{\partial_\ybfm^\alphabfm D}
    &\leq \sum_{r=1}^{n} r! \normt{D}^{r+1}
    \sum_{P(\alphabfm, r)} \alphabfm! \prod_{j=1}^{n}
    \frac{\normt[\big]{\partial_\ybfm^{\betabfm_j} C}^{k_j}}{k_j! (\betabfm_j!)^{k_j}} \\
    &\leq \sum_{r=1}^{n} r! \groupp[\bigg]{\frac{1}{a_{\min}^2}}^{r+1}
    \sum_{P(\alphabfm, r)} \alphabfm! \prod_{j=1}^{n}
    \frac{\groupp[\big]{2 a_{\max}^2 \gammabfm^{\betabfm_j}}^{k_j}}{k_j! (\betabfm_j!)^{k_j}} \\
    &= \gammabfm^{\alphabfm} \sum_{r=1}^{n} r! \groupp[\bigg]{\frac{1}{a_{\min}^2}}^{r+1}
    \groupp[\big]{2 a_{\max}^2}^r \sum_{P(\alphabfm, r)} \alphabfm! \prod_{j=1}^{n}
    \frac{1}{k_j! (\betabfm_j!)^{k_j}}
  \end{align*}
  and
  \begin{align*}
    \normt[\big]{\partial_\ybfm^\alphabfm E}
    &\leq \sum_{r=1}^{n} \norms{c_r} \normt{E} \normt{D}^r
    \sum_{P(\alphabfm, r)} \alphabfm! \prod_{j=1}^{n}
    \frac{\normt[\big]{\partial_\ybfm^{\betabfm_j} C}^{k_j}}{k_j! (\betabfm_j!)^{k_j}} \\
    &\leq \sum_{r=1}^{n} \norms{c_r} a_{\max} \groupp[\bigg]{\frac{1}{a_{\min}^2}}^r
    \sum_{P(\alphabfm, r)} \alphabfm! \prod_{j=1}^{n}
    \frac{\groupp[\big]{2 a_{\max}^2 \gammabfm^{\betabfm_j}}^{k_j}}{k_j! (\betabfm_j!)^{k_j}} \\
    &= \gammabfm^{\alphabfm} \sum_{r=1}^{n} \norms{c_r} a_{\max} \groupp[\bigg]{\frac{1}{a_{\min}^2}}^r
    \groupp[\big]{2 a_{\max}^2}^r \sum_{P(\alphabfm, r)} \alphabfm! \prod_{j=1}^{n}
    \frac{1}{k_j! (\betabfm_j!)^{k_j}} .
  \end{align*}
  Since we know from \cite{CS} that
  \begin{equation*}
    \sum_{P(\alphabfm, r)} \alphabfm! \prod_{j=1}^{n} \frac{1}{k_j! (\betabfm_j!)^{k_j}} = S_{n,r} ,
  \end{equation*}
  where $S_{n,r}$ denotes the Stirling numbers of the second kind, see \cite{abra}, and
  that $\norms{c_r} \leq r!$, we can obtain
  \begin{equation*}
    \normt[\big]{\partial_\ybfm^\alphabfm D}
    \leq \frac{1}{a_{\min}^2} \gammabfm^{\alphabfm}
    \sum_{r=1}^{n} r! \groupp[\bigg]{\frac{2 a_{\max}^2}{a_{\min}^2}}^r S_{n,r}
    \leq \frac{1}{a_{\min}^2}
    \groupp[\bigg]{\frac{2 a_{\max}^2}{a_{\min}^2}}^{\norms{\alphabfm}} \gammabfm^{\alphabfm}
    \sum_{r=1}^{n} r! S_{n,r}
  \end{equation*}
  and
  \begin{equation*}
    \normt[\big]{\partial_\ybfm^\alphabfm E}
    \leq a_{\max} \gammabfm^{\alphabfm}
    \sum_{r=1}^{n} r! \groupp[\bigg]{\frac{2 a_{\max}^2}{a_{\min}^2}}^r S_{n,r}
    \leq a_{\max}
    \groupp[\bigg]{\frac{2 a_{\max}^2}{a_{\min}^2}}^{\norms{\alphabfm}} \gammabfm^{\alphabfm}
    \sum_{r=1}^{n} r! S_{n,r} .
  \end{equation*}
  Because $\sum_{r=1}^{n} r! S_{n,r}$ equals the $n$-th ordered Bell number,
  we can bound it, see \cite{Beck}, by
  \begin{equation*}
    \sum_{r=1}^{n} r! S_{n,r} \leq \frac{n!}{(\log 2)^n}.
  \end{equation*}
  This implies the assertion.
\end{proof}
By combining the previous two lemmata, we derive the following result.
\begin{lemma}\label{lemma:Fybounds}
  We define $\Fbfm$ by
  \begin{equation*}
    \Fbfm(\xbfm, \ybfm)
    \isdef \frac{\Vbfm(\xbfm, \ybfm) \Vbfm^\trans(\xbfm, \ybfm)}{\Vbfm^\trans(\xbfm, \ybfm) \Vbfm(\xbfm, \ybfm)} .
  \end{equation*}
  Then, we have for all $\alphabfm \in \Nbbb_0^M$ that
  \begin{equation*}
    \normt[\big]{\partial_\ybfm^\alphabfm \Fbfm}_{d \times d}
    \leq \norms{\alphabfm}! \frac{6 a_{\max}^2}{a_{\min}^2}
    \groupp[\bigg]{\frac{2 a_{\max}^2}{a_{\min}^2 \log 2}}^{\norms{\alphabfm}} \gammabfm^{\alphabfm} .
  \end{equation*}
\end{lemma}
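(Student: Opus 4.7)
The plan is to write $\Fbfm = \Bbfm \cdot D$, where $\Bbfm = \Vbfm\Vbfm^\trans$ and $D = (\Vbfm^\trans\Vbfm)^{-1}$ are exactly the quantities bounded in Lemmas~\ref{lemma:Bybounds} and \ref{lemma:DEybounds} (note that $D$ is scalar‑valued, so $\Bbfm D$ is a matrix $\times$ scalar product and Leibniz applies termwise in the matrix). Applying the multivariate Leibniz rule
\begin{equation*}
  \partial_\ybfm^\alphabfm \Fbfm = \sum_{\betabfm \leq \alphabfm} \binom{\alphabfm}{\betabfm} \bigl(\partial_\ybfm^{\betabfm}\Bbfm\bigr)\bigl(\partial_\ybfm^{\alphabfm-\betabfm}D\bigr)
\end{equation*}
and taking norms, I would substitute the bounds $\normt{\partial_\ybfm^{\betabfm}\Bbfm}_{d\times d} \le 2a_{\max}^2\gammabfm^{\betabfm}$ and $\normt{\partial_\ybfm^{\alphabfm-\betabfm}D} \le |\alphabfm-\betabfm|!\,a_{\min}^{-2} c^{|\alphabfm-\betabfm|}\gammabfm^{\alphabfm-\betabfm}$, where I abbreviate $c \isdef \tfrac{2a_{\max}^2}{a_{\min}^2\log 2}$. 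The factor $\gammabfm^{\betabfm}\gammabfm^{\alphabfm-\betabfm} = \gammabfm^{\alphabfm}$ then pulls out of the sum, together with the constant $\tfrac{2a_{\max}^2}{a_{\min}^2}$.

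The remaining task is to control the combinatorial sum $\sum_{\betabfm\le\alphabfm}\binom{\alphabfm}{\betabfm}|\alphabfm-\betabfm|!\,c^{|\alphabfm-\betabfm|}$. This I would handle by grouping terms with the same $k = |\betabfm|$ and invoking the Vandermonde identity $\sum_{\betabfm\le\alphabfm,\,|\betabfm|=k}\binom{\alphabfm}{\betabfm} = \binom{|\alphabfm|}{k}$, which reduces the sum to the univariate expression
\begin{equation*}
  \sum_{k=0}^{|\alphabfm|}\binom{|\alphabfm|}{k}(|\alphabfm|-k)!\,c^{|\alphabfm|-k}
  = |\alphabfm|!\sum_{j=0}^{|\alphabfm|}\frac{c^{j}}{(|\alphabfm|-j)!}.
\end{equation*}
Factoring out $c^{|\alphabfm|}$ (note $c \ge 2/\log 2 > 1$ since $a_{\max}\ge 1 \ge a_{\min}$) gives the bound $|\alphabfm|!\,c^{|\alphabfm|}\sum_{k\ge 0}(c^{k}k!)^{-1} \le |\alphabfm|!\,c^{|\alphabfm|}e^{1/c} \le 3|\alphabfm|!\,c^{|\alphabfm|}$, which upon multiplication by the prefactor $\tfrac{2a_{\max}^2}{a_{\min}^2}\gammabfm^{\alphabfm}$ yields exactly the claimed constant $6a_{\max}^2/a_{\min}^2$.

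The main obstacle will be the combinatorial step: recognizing that the multi‑index sum collapses cleanly through Vandermonde, since the factorial growth of $\partial^{\alphabfm-\betabfm}D$ and the non‑factorial growth of $\partial^{\betabfm}\Bbfm$ (which vanishes for $|\betabfm|>2$ but we bound uniformly by $2a_{\max}^2\gammabfm^{\betabfm}$ for simplicity) interact in a way that would look brittle if one tried to track them term by term. Everything else is bookkeeping, and the final inequality follows once one observes $e^{1/c}<3$ uniformly in the ellipticity constants.
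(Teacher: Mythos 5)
Your proposal is correct and follows essentially the same route as the paper: the same factorisation $\Fbfm = D\,\Bbfm$, the Leibniz rule with the bounds from Lemmata~\ref{lemma:Bybounds} and~\ref{lemma:DEybounds}, and the identity \eqref{eq:combinatorialidentity} to collapse the multi-index sum. The only (cosmetic) difference is that you keep $c^{\norms{\alphabfm-\betabfm}}$ inside the combinatorial sum and bound the resulting partial series by $e^{1/c}<3$, whereas the paper first bounds $c^{\norms{\betabfm}}\le c^{\norms{\alphabfm}}$ and then uses $\sum_k 1/k!\le 3$ — both yield the same constant $6a_{\max}^2/a_{\min}^2$.
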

\begin{proof}
  We can equivalently state $\Fbfm$ as $\Fbfm(\xbfm, \ybfm) = D(\xbfm, \ybfm) \Bbfm(\xbfm, \ybfm)$.
  Then, by applying the Leibniz rule, we arrive at
  \begin{equation*}
    \partial_\ybfm^\alphabfm \Fbfm(\xbfm, \ybfm)
    = \sum_{\betabfm \leq \alphabfm} \binom{\alphabfm}{\betabfm} 
    \groupp[\Big]{\partial_\ybfm^\betabfm D(\xbfm, \ybfm)}
    \groupp[\Big]{\partial_\ybfm^{\alphabfm-\betabfm} \Bbfm(\xbfm, \ybfm)} .
  \end{equation*}
  Taking the norm and using the bounds from Lemma~\ref{lemma:Bybounds}
  and Lemma~\ref{lemma:DEybounds} leads us to
  \begin{align*}
    \normt[\big]{\partial_\ybfm^\alphabfm \Fbfm}_{d \times d}
    &\leq \sum_{\betabfm \leq \alphabfm} \binom{\alphabfm}{\betabfm} 
    \normt[\big]{\partial_\ybfm^\betabfm D}
    \normt[\big]{\partial_\ybfm^{\alphabfm-\betabfm} \Bbfm}_{d \times d} \\
    &\leq \sum_{\betabfm \leq \alphabfm} \binom{\alphabfm}{\betabfm} 
    \norms{\betabfm}! \frac{1}{a_{\min}^2}
    \groupp[\bigg]{\frac{2 a_{\max}^2}{a_{\min}^2 \log 2}}^{\norms{\betabfm}} \gammabfm^{\betabfm}
    2 a_{\max}^2 \gammabfm^{\alphabfm-\betabfm} \\
    &\leq \frac{2 a_{\max}^2}{a_{\min}^2}
    \groupp[\bigg]{\frac{2 a_{\max}^2}{a_{\min}^2 \log 2}}^{\norms{\alphabfm}} \gammabfm^{\alphabfm}
    \sum_{\betabfm \leq \alphabfm} \binom{\alphabfm}{\betabfm} \norms{\betabfm}! .
  \end{align*}
  Lastly, the combinatorial identity
  \begin{equation}
    \label{eq:combinatorialidentity}
    \sum_{\substack{\betabfm \leq \alphabfm\\\norms{\betabfm}=j}} \binom{\alphabfm}{\betabfm}
    = \binom{\norms{\alphabfm}}{j}
  \end{equation}
  yields the bound
  \begin{equation*}
    \sum_{\betabfm \leq \alphabfm} \binom{\alphabfm}{\betabfm} \norms{\betabfm}!
    = \sum_{j=0}^{\norms{\alphabfm}} j!
    \sum_{\substack{\betabfm \leq \alphabfm\\\norms{\betabfm}=j}} \binom{\alphabfm}{\betabfm}
    = \sum_{j=0}^{\norms{\alphabfm}} j! \binom{\norms{\alphabfm}}{j}
    = \norms{\alphabfm}! \sum_{k=0}^{\norms{\alphabfm}} \frac{1}{k!}
    \leq 3 \norms{\alphabfm}!. 
  \end{equation*}
\end{proof}
Gathering all the regularity estimates for the different terms in \eqref{eq:AdmV} gives us 
the regularity of the diffusion matrix \({\bf A}\).

\begin{theorem}\label{theorem:Aybounds}
  The derivatives of the diffusion matrix $\Abfm$ defined in \eqref{eq:AdmV} satisfy
  \begin{equation*}
    \normt[\big]{\partial_\ybfm^\alphabfm \Abfm}_{d \times d}
    \leq (\norms{\alphabfm} + 1)! (2 a_{\max})
    \frac{6 a_{\max}^2}{a_{\min}^2}
    \groupp[\bigg]{\frac{2 a_{\max}^2}{a_{\min}^2 \log 2}}^{\norms{\alphabfm}} \gammabfm^{\alphabfm}
  \end{equation*}
  for all $\alphabfm \in \Nbbb_0^M$ with $\norms{\alphabfm} \geq 1$.
\end{theorem}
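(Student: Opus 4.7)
The plan is to recognise that $\Abfm$ can be written in terms of the previously analysed pieces and then apply the Leibniz rule together with a combinatorial identity. Specifically, since $\norm{\Vbfm(\xbfm,\ybfm)}_2 = \sqrt{\Vbfm^\trans\Vbfm} = E(\xbfm,\ybfm)$ and the fraction in \eqref{eq:AdmV} is exactly $\Fbfm(\xbfm,\ybfm)$, we have
\begin{equation*}
  \Abfm(\xbfm,\ybfm) = a\Ibfm + \groupp[\big]{E(\xbfm,\ybfm) - a}\Fbfm(\xbfm,\ybfm) .
\end{equation*}
The term $a\Ibfm$ is independent of $\ybfm$, so for $\norms{\alphabfm} \geq 1$ we obtain $\partial_\ybfm^\alphabfm \Abfm = \partial_\ybfm^\alphabfm\groups[\big]{(E-a)\Fbfm}$, and the Leibniz rule gives
\begin{equation*}
  \partial_\ybfm^\alphabfm \Abfm(\xbfm,\ybfm)
  = \sum_{\betabfm \leq \alphabfm} \binom{\alphabfm}{\betabfm}
  \groupp[\Big]{\partial_\ybfm^\betabfm (E - a)(\xbfm,\ybfm)}
  \groupp[\Big]{\partial_\ybfm^{\alphabfm-\betabfm} \Fbfm(\xbfm,\ybfm)} .
\end{equation*}

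Next, I would derive a uniform bound on $\partial_\ybfm^\betabfm(E-a)$ valid for all $\betabfm \in \Nbbb_0^M$. For $\betabfm = \zerobfm$, the estimate $\normt{E-a} \leq \normt{E} + \norms{a} \leq 2 a_{\max}$ holds. For $\norms{\betabfm} \geq 1$, Lemma~\ref{lemma:DEybounds} directly gives $\normt{\partial_\ybfm^\betabfm E} \leq \norms{\betabfm}! a_{\max} K^{\norms{\betabfm}} \gammabfm^\betabfm$ where $K \isdef \frac{2 a_{\max}^2}{a_{\min}^2 \log 2}$; enlarging the prefactor from $a_{\max}$ to $2 a_{\max}$ allows both cases to be written uniformly as
\begin{equation*}
  \normt[\big]{\partial_\ybfm^\betabfm (E - a)}
  \leq \norms{\betabfm}! (2 a_{\max}) K^{\norms{\betabfm}} \gammabfm^{\betabfm} .
\end{equation*}
Combining this with the estimate on $\partial_\ybfm^{\alphabfm-\betabfm}\Fbfm$ from Lemma~\ref{lemma:Fybounds}, the factors $\gammabfm^\betabfm \gammabfm^{\alphabfm-\betabfm} = \gammabfm^\alphabfm$ and $K^{\norms{\betabfm}} K^{\norms{\alphabfm-\betabfm}} = K^{\norms{\alphabfm}}$ cleanly separate, leaving
\begin{equation*}
  \normt[\big]{\partial_\ybfm^\alphabfm \Abfm}_{d \times d}
  \leq (2 a_{\max}) \frac{6 a_{\max}^2}{a_{\min}^2} K^{\norms{\alphabfm}} \gammabfm^\alphabfm
  \sum_{\betabfm \leq \alphabfm} \binom{\alphabfm}{\betabfm} \norms{\betabfm}! \norms{\alphabfm-\betabfm}! .
\end{equation*}

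The final step is to evaluate the combinatorial sum. Using the identity \eqref{eq:combinatorialidentity} employed in the proof of Lemma~\ref{lemma:Fybounds}, grouping by $j = \norms{\betabfm}$ yields
\begin{equation*}
  \sum_{\betabfm \leq \alphabfm} \binom{\alphabfm}{\betabfm} \norms{\betabfm}! \norms{\alphabfm-\betabfm}!
  = \sum_{j=0}^{\norms{\alphabfm}} j! (\norms{\alphabfm}-j)! \binom{\norms{\alphabfm}}{j}
  = \sum_{j=0}^{\norms{\alphabfm}} \norms{\alphabfm}!
  = (\norms{\alphabfm}+1)! ,
\end{equation*}
which produces exactly the claimed bound. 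There is no serious obstacle: the only small subtlety is making the estimate for $\partial_\ybfm^\betabfm(E-a)$ case\hyp{}independent (hence the harmless factor-of-two enlargement), so that the Leibniz sum collapses into the telescoping identity above.
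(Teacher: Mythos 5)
Your proof is correct and follows essentially the same route as the paper: Leibniz rule applied to the product of the scalar factor and $\Fbfm$, the bounds from Lemmata~\ref{lemma:DEybounds} and~\ref{lemma:Fybounds}, and the identity $\sum_{\betabfm \leq \alphabfm} \binom{\alphabfm}{\betabfm}\norms{\betabfm}!\,\norms{\alphabfm-\betabfm}! = (\norms{\alphabfm}+1)!$. The only (harmless) difference is bookkeeping: the paper splits $\Abfm = a\Ibfm + E\Fbfm - a\Fbfm$ and absorbs the extra $\norms{\alphabfm}!$ term into $(\norms{\alphabfm}+1)!$ at the end, whereas you keep $(E-a)\Fbfm$ together and put the factor of two into the uniform bound on $\partial_\ybfm^\betabfm(E-a)$; both yield the identical constant.
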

\begin{proof}
  We can state $\Abfm$ as
  $\Abfm(\xbfm, \ybfm) = a \Ibfm + E(\xbfm, \ybfm) \Fbfm(\xbfm, \ybfm) - a \Fbfm(\xbfm, \ybfm)$,
  which, with the Leibniz rule, yields
  \begin{equation*}
    \partial_\ybfm^\alphabfm \Abfm(\xbfm, \ybfm)
    = \sum_{\betabfm \leq \alphabfm} \binom{\alphabfm}{\betabfm} 
    \groupp[\Big]{\partial_\ybfm^\betabfm E(\xbfm, \ybfm)}
    \groupp[\Big]{\partial_\ybfm^{\alphabfm-\betabfm} \Fbfm(\xbfm, \ybfm)}
    - a \partial_\ybfm^\alphabfm \Fbfm(\xbfm, \ybfm) .
  \end{equation*}
  Then, by taking the norm and
  inserting the bounds from Lemmata~\ref{lemma:DEybounds} and \ref{lemma:Fybounds},
  we arrive at
  \begin{align*}
    \normt[\big]{\partial_\ybfm^\alphabfm \Abfm}_{d \times d}
    &\leq \sum_{\betabfm \leq \alphabfm} \binom{\alphabfm}{\betabfm} 
    \normt[\big]{\partial_\ybfm^\betabfm E}
    \normt[\big]{\partial_\ybfm^{\alphabfm-\betabfm} \Fbfm}_{d \times d}
    + a_{\max} \normt[\big]{\partial_\ybfm^\alphabfm \Fbfm}_{d \times d} \\ 
    &\leq \sum_{\betabfm \leq \alphabfm} \binom{\alphabfm}{\betabfm} 
    \norms{\betabfm}! a_{\max}
    \groupp[\bigg]{\frac{2 a_{\max}^2}{a_{\min}^2 \log 2}}^{\norms{\betabfm}} \gammabfm^{\betabfm} \\
    &\qquad\qquad \norms{\alphabfm-\betabfm}! \frac{6 a_{\max}^2}{a_{\min}^2}
    \groupp[\bigg]{\frac{2 a_{\max}^2}{a_{\min}^2 \log 2}}^{\norms{\alphabfm-\betabfm}}
    \gammabfm^{\alphabfm-\betabfm} \\
    &\quad {} + \norms{\alphabfm}! a_{\max} \frac{6 a_{\max}^2}{a_{\min}^2}
    \groupp[\bigg]{\frac{2 a_{\max}^2}{a_{\min}^2 \log 2}}^{\norms{\alphabfm}} \gammabfm^{\alphabfm} \\ 
    &\leq a_{\max} \frac{6 a_{\max}^2}{a_{\min}^2}
    \groupp[\bigg]{\frac{2 a_{\max}^2}{a_{\min}^2 \log 2}}^{\norms{\alphabfm}} \gammabfm^{\alphabfm}
    \sum_{\betabfm \leq \alphabfm} \binom{\alphabfm}{\betabfm}
    \norms{\betabfm}! \norms{\alphabfm-\betabfm}! \\
    &\quad {} + a_{\max} \frac{6 a_{\max}^2}{a_{\min}^2}
    \groupp[\bigg]{\frac{2 a_{\max}^2}{a_{\min}^2 \log 2}}^{\norms{\alphabfm}} \gammabfm^{\alphabfm}
    \norms{\alphabfm}! .
  \end{align*}
  Finally, the combinatorial identity \eqref{eq:combinatorialidentity} yields, see eg.\ \cite{HPS14},
  \begin{equation*}
    \sum_{\betabfm \leq \alphabfm} \binom{\alphabfm}{\betabfm}
    \norms{\betabfm}! \norms{\alphabfm-\betabfm}!
    = (\norms{\alphabfm} + 1)! . 
  \end{equation*}
\end{proof}

If we now define the modified sequence $\mubfm = (\mu_k)_{k \in \Nbbb_0}$ as
\begin{equation*}
  \mu_k \isdef \frac{4 a_{\max}^2}{a_{\min}^2 \log 2} \gamma_k
\quad\text{and also}\quad
  c_\Abfm \isdef (2 a_{\max}) \frac{6 a_{\max}^2}{a_{\min}^2},
\end{equation*}
we can summarize the results attained so far by\footnote{Note 
that the additional factor of $2$ in $\mu_k$ removes the factor 
$\norms{\alphabfm}+1$ from the factorial expression, since we 
know that $2^{\norms{\alphabfm}} \geq \norms{\alphabfm}+1$.}
\begin{equation*}
  \normt[\big]{\partial_\ybfm^\alphabfm \Abfm}_{d \times d}
  \leq \norms{\alphabfm}! c_\Abfm \mubfm^{\alphabfm}.
\end{equation*}

\subsection{Parametric regularity of the solution}
The above regularity estimate carries over to the solution 
with slightly different constants.
\begin{theorem}\label{theorem:uybounds}
  For almost every $\ybfm \in \square$,
  the derivatives of the solution $u(\ybfm)$ of \eqref{eq:smwpoissonsw} satisfy
  \begin{equation*}
    \norm[\big]{\partial_\ybfm^\alphabfm u(\ybfm)}_{H^1(D)} 
    \leq \norms{\alphabfm}! \mubfm^\alphabfm
    \groupp[\bigg]{\frac{a_{\max}}{a_{\min} c_V^2} \max\groupb[\Big]{2 c_\Abfm, \norm{f}_{\widetilde{H}^{-1}(D)} + \norm{g}_{H^{-1/2}(\Gamma_N)}}}^{\norms{\alphabfm}+1} .
  \end{equation*}
\end{theorem}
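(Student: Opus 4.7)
The plan is induction on $n\isdef\norms{\alphabfm}$. To declutter the notation, set
\[
  L \isdef \frac{a_{\max}}{a_{\min} c_V^2}\max\groupb[\big]{2\,c_\Abfm,\;\norm{f}_{\widetilde{H}^{-1}(D)}+\norm{g}_{H^{-1/2}(\Gamma_N)}},
\]
so the claim reads $\norm{\partial_\ybfm^\alphabfm u(\ybfm)}_{H^1(D)}\leq \norms{\alphabfm}!\,\mubfm^\alphabfm\,L^{\norms{\alphabfm}+1}$. The base case $n=0$ is Lemma~\ref{lemma:ubound} together with the trivial observation that $\frac{a_{\max}}{a_{\min}c_V^2}\bigl(\norm{f}_{\widetilde{H}^{-1}(D)}+\norm{g}_{H^{-1/2}(\Gamma_N)}\bigr)\leq L$.

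For the inductive step I would differentiate the spatially weak formulation~\eqref{eq:smwpoissonsw} with respect to $\ybfm^\alphabfm$. Since $\ell(v)$ carries no $\ybfm$-dependence, the Leibniz rule yields, for every $v\in V$,
\[
  \sum_{\betabfm\leq\alphabfm}\binom{\alphabfm}{\betabfm}\int_D\groupa[\big]{\groupp{\partial_\ybfm^\betabfm\Abfm}(\xbfm,\ybfm)\,\Grad_\xbfm\partial_\ybfm^{\alphabfm-\betabfm}u(\ybfm),\,\Grad_\xbfm v}\diff{\xbfm}=0.
\]
Isolating the $\betabfm=\zerobfm$ term, testing with $v=\partial_\ybfm^\alphabfm u(\ybfm)\in V$, and using the uniform ellipticity~\eqref{eq:Aellipticity} on the diagonal term together with Cauchy--Schwarz on the remaining terms, one obtains
\[
  a_{\min}\,\norm{\partial_\ybfm^\alphabfm u(\ybfm)}_V
  \leq \sum_{\zerobfm\neq\betabfm\leq\alphabfm}\binom{\alphabfm}{\betabfm}\normt{\partial_\ybfm^\betabfm\Abfm}_{d\times d}\,\norm{\partial_\ybfm^{\alphabfm-\betabfm}u(\ybfm)}_V
\]
after dividing by $\norm{\partial_\ybfm^\alphabfm u(\ybfm)}_V$.

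I then plug in Theorem~\ref{theorem:Aybounds}, which gives $\normt{\partial_\ybfm^\betabfm\Abfm}_{d\times d}\leq\norms{\betabfm}!\,c_\Abfm\,\mubfm^\betabfm$ for $\norms{\betabfm}\geq 1$, and the induction hypothesis combined with the norm equivalence $\norm{\cdot}_V\leq\norm{\cdot}_{H^1(D)}$. Regrouping the sum by $\norms{\betabfm}=j$ via the identity~\eqref{eq:combinatorialidentity} and the elementary identity $\binom{n}{j}\,j!\,(n-j)!=n!$, the off-diagonal contributions collapse to
\[
  \norm{\partial_\ybfm^\alphabfm u(\ybfm)}_V
  \leq \frac{c_\Abfm\,n!\,\mubfm^\alphabfm}{a_{\min}}\sum_{j=1}^{n}L^{n-j+1}
  \leq \frac{2\,c_\Abfm\,n!\,\mubfm^\alphabfm\,L^{n}}{a_{\min}},
\]
where the geometric-sum bound $L+L^2+\cdots+L^n\leq 2L^n$ uses $L\geq 2$. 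Finally, converting back via $\norm{\cdot}_{H^1(D)}\leq c_V^{-1}\norm{\cdot}_V$ and invoking the inequality $\tfrac{2c_\Abfm}{a_{\min}c_V}\leq L$ (which reduces to $c_V\leq a_{\max}$ once one substitutes $L\geq\tfrac{2a_{\max}c_\Abfm}{a_{\min}c_V^2}$) produces the desired bound and closes the induction.

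The one place where care is needed is the choice of $L$: the ``$2c_\Abfm$'' slot inside the maximum is precisely what absorbs the constant produced by the inductive cascade, while the ``$\norm{f}_{\widetilde{H}^{-1}(D)}+\norm{g}_{H^{-1/2}(\Gamma_N)}$'' slot covers the base case supplied by Lemma~\ref{lemma:ubound}. Apart from this bookkeeping, the proof is a standard iteration on the parametric weak form, relying on Leibniz and the combinatorial identity already exploited in Lemma~\ref{lemma:Fybounds}.
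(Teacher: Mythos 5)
Your proposal is correct and follows essentially the same route as the paper: differentiate the weak formulation, apply the Leibniz rule, test with $v=\partial_\ybfm^\alphabfm u(\ybfm)$, invoke coercivity and Theorem~\ref{theorem:Aybounds}, and close the induction via the identity \eqref{eq:combinatorialidentity} and a geometric-series bound using $L\geq 2$. The only (immaterial) difference is that you run the estimate in the $\norm{\cdot}_V$-seminorm and convert to $\norm{\cdot}_{H^1(D)}$ at the end, whereas the paper carries the factor $c_V^2$ in the coercivity constant throughout; both bookkeepings yield the stated constant.
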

\begin{proof}
  By differentiation of the variational formulation \eqref{eq:smwpoissonsw} with respect to $\ybfm$
  we arrive, for arbitrary $v \in V$, at
  \begin{equation*}
    \groupp[\Big]{\partial_\ybfm^\alphabfm \groupp[\big]{\Abfm(\ybfm) \Grad_\xbfm u(\ybfm)},
    \Grad_\xbfm v}_{L^2(D; \Rbbb^d)} = 0 .
  \end{equation*}
  Applying the Leibniz rule on the left\hyp{}hand side yields
  \begin{equation*}
    \groupp[\bigg]{\sum_{\betabfm \leq \alphabfm} \binom{\alphabfm}{\betabfm}
    \partial_\ybfm^{\alphabfm-\betabfm} \Abfm(\ybfm)
    \partial_\ybfm^{\betabfm} \Grad_\xbfm u(\ybfm) ,
    \Grad_\xbfm v}_{L^2(D; \Rbbb^d)} = 0 .
  \end{equation*}
  Then, by rearranging and using the linearity of the gradient, we find
  \begin{equation*}
    \int_D \groupa[\Big]{\Abfm(\ybfm) \Grad_\xbfm \partial_\ybfm^\alphabfm u(\ybfm),
    \Grad_\xbfm v} \diff{\xbfm}
    = - \sum_{\betabfm < \alphabfm} \binom{\alphabfm}{\betabfm}
    \int_D \groupa[\Big]{\partial_\ybfm^{\alphabfm-\betabfm} \Abfm(\ybfm)
    \Grad_\xbfm \partial_\ybfm^{\betabfm} u(\ybfm) ,
    \Grad_\xbfm v} \diff{\xbfm} .
  \end{equation*}
  We now choose $v = \partial_\ybfm^\alphabfm u(\ybfm)$ and
  employ the coercivity as well as the bound from Theorem~\ref{theorem:Aybounds}.
  This results in
  \begin{align*}
    a_{\min} c_V^2 \norm[\big]{\partial_\ybfm^\alphabfm u(\ybfm)}_{H^1(D)}^2
    &\leq - \sum_{\betabfm < \alphabfm} \binom{\alphabfm}{\betabfm}
    \int_D \groupa[\Big]{\partial_\ybfm^{\alphabfm-\betabfm} \Abfm(\ybfm)
    \Grad_\xbfm\partial_\ybfm^{\betabfm} u(\ybfm) ,
    \Grad_\xbfm \partial_\ybfm^\alphabfm u(\ybfm)} \diff{\xbfm} \\
    &\leq \sum_{\betabfm < \alphabfm} \binom{\alphabfm}{\betabfm}
    \normt[\big]{\partial_\ybfm^{\alphabfm-\betabfm} \Abfm}_{d \times d}
    \norm[\big]{\partial_\ybfm^{\betabfm} u(\ybfm)}_{H^1(D)}
    \norm[\big]{\partial_\ybfm^\alphabfm u(\ybfm)}_{H^1(D)} \\
    &\leq \sum_{\betabfm < \alphabfm} \binom{\alphabfm}{\betabfm}
    \norms{\alphabfm-\betabfm}! c_\Abfm \mubfm^{\alphabfm-\betabfm}
    \norm[\big]{\partial_\ybfm^{\betabfm} u(\ybfm)}_{H^1(D)}
    \norm[\big]{\partial_\ybfm^\alphabfm u(\ybfm)}_{H^1(D)} ,
  \end{align*}
  from which we derive
  \begin{equation*}
    \norm[\big]{\partial_\ybfm^\alphabfm u(\ybfm)}_{H^1(D)}
    \leq \frac{c}{2}
    \sum_{\betabfm < \alphabfm} \binom{\alphabfm}{\betabfm}
    \norms{\alphabfm-\betabfm}! \mubfm^{\alphabfm-\betabfm}
    \norm[\big]{\partial_\ybfm^{\betabfm} u(\ybfm)}_{H^1(D)} ,
  \end{equation*}
  where
  \begin{equation*}
    c \isdef \frac{a_{\max}}{a_{\min} c_V^2} \max\groupb[\Big]{2 c_\Abfm, \norm{f}_{\widetilde{H}^{-1}(D)} + \norm{g}_{H^{-1/2}(\Gamma_N)}} .
  \end{equation*}
  We note that, by definition of $c$, we have $c \geq 2$
  and furthermore, because of Lemma~\ref{lemma:ubound}, we also have that
  $\norm[\big]{u(\ybfm)}_{H^1(D)} \leq c$,
  which means that the assertion is true for $\norms{\alphabfm} = 0$.

  Thus, we can use an induction over $\norms{\alphabfm}$ to prove the hypothesis
  \begin{equation*}
    \norm[\big]{\partial_\ybfm^\alphabfm u(\ybfm)}_{H^1(D)} 
    \leq \norms{\alphabfm}! \mubfm^\alphabfm c^{\norms{\alphabfm}+1}
  \end{equation*}
  for $\norms{\alphabfm} > 0$.
  Let the assertions hold for all $\alphabfm$,
  which satisfy $\norms{\alphabfm} \leq n-1$ for some $n \geq 1$.
  Then, we know for all $\alphabfm$ with $\norms{\alphabfm} = n$ that
  \begin{align*}
    \norm[\big]{\partial_\ybfm^\alphabfm u(\ybfm)}_{H^1(D)}
    &\leq \frac{c}{2}
    \sum_{\betabfm < \alphabfm} \binom{\alphabfm}{\betabfm}
    \norms{\alphabfm-\betabfm}! \mubfm^{\alphabfm-\betabfm}
    \norm[\big]{\partial_\ybfm^{\betabfm} u(\ybfm)}_{H^1(D)} \\
    &\leq \frac{c}{2} \mubfm^\alphabfm
    \sum_{\betabfm < \alphabfm} \binom{\alphabfm}{\betabfm}
    \norms{\alphabfm-\betabfm}! \norms{\betabfm}!
    c^{\norms{\betabfm}+1} \\
    &= \frac{c}{2} \mubfm^\alphabfm
    \sum_{j=0}^{n-1} \sum_{\substack{\betabfm < \alphabfm\\\norms{\betabfm}=j}} \binom{\alphabfm}{\betabfm}
    \norms{\alphabfm-\betabfm}! \norms{\betabfm}!
    c^{\norms{\betabfm}+1} .
  \end{align*}
  Making use of the combinatorial identity \eqref{eq:combinatorialidentity} yields
  \begin{align*}
    \norm[\big]{\partial_\ybfm^\alphabfm u(\ybfm)}_{H^1(D)}
    &\leq \frac{c}{2} \mubfm^\alphabfm
    \sum_{j=0}^{n-1} \binom{\norms{\alphabfm}}{j}
    (\norms{\alphabfm}-j)! j! c^{j+1} \\
    &= \frac{c}{2} \norms{\alphabfm}! \mubfm^\alphabfm
    c \sum_{j=0}^{n-1} c^j 
    \leq \frac{c}{2} \norms{\alphabfm}! \mubfm^\alphabfm
    c \frac{c^{\norms{\alphabfm}}}{c-1} 
    \leq \frac{c}{2(c-1)} \norms{\alphabfm}! \mubfm^\alphabfm
    c^{\norms{\alphabfm}+1} .
  \end{align*}
  Now, since $c \geq 2$, we have $c \leq 2(c-1)$ and hence also
  \begin{equation*}
    \norm[\big]{\partial_\ybfm^\alphabfm u(\ybfm)}_{H^1(D)}
    \leq \norms{\alphabfm}! \mubfm^\alphabfm c^{\norms{\alphabfm}+1} .
  \end{equation*}
  This completes the proof.
\end{proof}

\subsection{Numerical quadrature in the parameter}
Because of the regularity estimates shown before,
we can refer to \cite[Lemma 7]{HPS16}, which is a 
straightforward consequence from the results in \cite{Wan02},
for the convergence rate {of} the quasi\hyp{}Monte Carlo 
method (QMC) based on the Halton\hyp{}points. Therefore, under 
the assumptions made there, we can conclude that, for any 
$\delta > 0$, there is a constant $C_\delta$ such that
\begin{equation*}
  \norm[\bigg]{\Mean[u] -\frac{1}{N}\sum_{i=1}^N 
  	u({\boldsymbol\xi}_i)}_{H^1(D)} \leq C_\delta N^{\delta-1} .
\end{equation*}
A similar result {also} accounts for the variance \(\mathbb{V}[u]\), 
see eg.~\cite{HPS16x}.

For the sparse grid quadrature (SG), assume that 
\(\gamma_k\leq c k^{-r}\) for some constants \(c,r>0\).
Then, the anisotropic sparse Gauss-Legendre quadrature 
on level \(q\) with \(N(q)\) points satisfies the error estimate
\[
\|\Mean[u]-\mathcal{A}_{\bf w}(q,M)u\|_{H^1_0(D)}
\leq C N(q)^{-r/(2\log\log M)}\|u\|_{C(\square; H^1_0(D))}
\]
with a constant \(C>0\), see \cite{HHPS15}. Herein, we have 
\(w_k\isdef\log\big(\frac{1}{\gamma_k}+\sqrt{1+1/\gamma_k^2}\big)\), 
see eg.\ \cite{BNT}, and
\[
\mathcal{A}_{\bf w}(q,M)\isdef\sum_{{\boldsymbol\alpha}\in Y_{\bf w}(q,M)}
	c_{\bf w}({\boldsymbol\alpha}){\bf Q}_{\boldsymbol\alpha}\quad\text{with}
\quad c_{\bf w}({\boldsymbol\alpha})\isdef\sum_{{{\boldsymbol\beta}\in\{0,1\}^M} 
	\atop \langle{\boldsymbol\alpha}+{\boldsymbol\beta},{\bf w}\rangle\leq q} (-1)^{|\boldsymbol\beta|},
\]
where \({\bf Q}_{\boldsymbol\alpha}\) denotes the tensor 
product Gauss-Legendre quadrature operator of degree 
\(\lceil\boldsymbol\alpha/2\rceil\).\footnote{Note that
the quadrature operator \(\mathcal{A}_{\bf w}(q,M)\) rather 
refers to the sparse grid combination technique than the 
actual sparse grid quadrature operator.} The 
set \(Y_{\bf w}(q,M)\) is given according to
\[
 Y_{\bf w}(q,M)\isdef\big\{{\boldsymbol\alpha}\in\mathbb{N}_0^M:
 	q-\|{\bf w}\|_1\leq\langle{\boldsymbol\alpha},{\bf w}\rangle\leq q\big\}.
\]
With similiar arguments as in \cite{HPS16x}, the convergence 
result {again} carries over to \(\mathbb{V}[u]\).

\section{Numerical results}
We will now consider two examples of the model problem 
\eqref{eq:smwpoisson} with a diffusion coefficient of form 
\eqref{eq:AdmV} using the unit cube $D \isdef (0, 1)^3$ as 
the domain of computations. 
In both examples, we set 
the global strength \(a\) to $a \isdef 0.12$ and, for convenience, segment the boundary 
$\partial D$ into three disjoint parts:
\begin{align*}
  \Gamma_0 &\isdef \{1\} \times (0, 1) \times (0, 1) , \\
  \Gamma_1 &\isdef \{0\} \times (0, 1) \times (0, 1) \text{ and} \\
  \Gamma_2 &\isdef \partial D \setminus (\Gamma_0 \cup \Gamma_1) .
\end{align*}

Moreover, we choose 
the description of $\Vbfm$ to be defined by $\Mean[\Vbfm](\xbfm)
\isdef \begin{bmatrix} 1 & 0 & 0 \end{bmatrix}^\trans$ and
\begin{equation*}
      \Cov[\Vbfm](\xbfm, \xbfm')
      \isdef 0.01\exp\groupp[\bigg]{-\frac{\norm[\big]{\xbfm-\xbfm'}_2^2}{50}} 
      	\begin{bmatrix} 1& 0 & 0 \\ 0 & 9s_2(\xbfm, \xbfm')  & 0 \\ 0 & 0 & 9s_3(\xbfm, \xbfm')  \end{bmatrix},
\end{equation*}
where
\[
s_j(\xbfm, \xbfm')\isdef 16\cdot x_j(1-x_j)\cdot x_j'(1-x_j').
\]
The effect of the function \(s_j\) is to suppress the covariance along the normal direction 
on the boundary \(\Gamma_2\).
Some samples of the normalized vector field $\Vbfm/\|\Vbfm\|_2$ used for our examples,
which are computed on the level $3$ discretisation, are shown in 
Figure~\ref{fig:exp12V} as stream traces. By their definition, the 
stream traces are tracing our notable diffusion direction.

The numerical implementation is performed with aid of the 
problem\hyp{}solving environment DOLFIN \cite{FEniCSbook},
which is a part of the FEniCS Project \cite{FEniCSbook}. The 
Karhunen-Lo\`eve expansion of the vector field \({\bf V}\) is computed
by the pivoted Cholesky decomposition, see \cite{HPS,HPS14} for
the details. For the finite element discretization, we employ a sequence 
of triangulations $\Tcal_l$, subsequently we will call the index $l$ the 
level, yielded by successive uniform refinement, ie.\ cutting each 
tetrahedron into $8$ tetrahedra. Level 0 consists of \(6\cdot 2^3=48\) tetrahedra.
Then, we use element\hyp{}wise constant functions and the truncated 
pivoted Cholesky decomposition for the Karhunen\hyp{}Lo\`eve expansion 
approximation and continuous element\hyp{}wise linear functions in space.
The truncation criterion for the pivoted Cholesky decomposition is that the 
relative trace error is smaller than $10^{-4}\cdot 4^{-l}$; see Table~\ref{tab:nrealis3d} for the resulting parameter
dimensions \(M\). 
Since the exact solutions of the examples are unknown,
the errors will have to be estimated.
Therefore, in this section, we will estimate the errors for the levels $0$ to $5$
by substituting the exact solution with
the approximate solution computed on the level $6$ triangulation $\Tcal_6$
using {the quasi\hyp{}Monte Carlo quadrature based on Halton points with} $10^4$ samples.

For every level, we also define 
the number of samples used by the different quadrature methods.
For the quasi\hyp{}Monte Carlo method based on Halton points, we choose
\begin{equation*}
  N_l^{\operatorname{QMC}} \isdef \groupc[\Big]{2^{l/(1-\delta)} \cdot 10}
\end{equation*}
with $\delta \isdef 0.2$; see Table~\ref{tab:nrealis3d} for the resulting 
values of $N_l$. For the sparse grid quadrature, we set \(q_l=2l+2\).
Based on these choices, we expect to see an asymptotic rate of 
convergence of $2^{-l}$ in the $H^1$\hyp{}norm for the mean 
and in the $W^{1,1}$\hyp{}norm for the variance; see again Table~\ref{tab:nrealis3d} for the resulting 
values of $N_l^{\operatorname{SG}}\isdef N(q_l)$.
As a validation for the reference solution, we consider here also the convergence 
of a Monte Carlo quadrature{, using \(N_l^{\operatorname{MC}}\isdef 4^l\) samples on the level \(l\),} with respect to this reference. 
Note that to obtain an approximation for the mean square error, we average five realisations of the 
Monte Carlo (MC) estimator.
\begin{table}[htb]
\centering
\begin{tabular}{lrrrrrr}
  \toprule
  $l$ & $0$ & $1$ & $2$ & $3$ & $4$ & $5$ \\
  \midrule
  $N_l^{\operatorname{QMC}}$ & $10$ & $24$ & $57$ & $135$ & $320$ & $762$ \\
  $N_l^{\operatorname{SG}}$ & $1$ & $7$ & $29$ & $87$ & $265$ & $909$ \\
  $N_l^{\operatorname{MC}}$ & $1$ & $4$ & $16$ & $64$ & $256$ & $1024$ \\

  \midrule
    $M$ & $16$ & $24$ & $28$ & $34$& $44$& $53$ \\
  \bottomrule
\end{tabular}
\caption{The number of samples for the first six levels and the respective parameter dimensions.}
\label{tab:nrealis3d}
\end{table}

\subsection{Example}
In the first example, we set the source to \(f(\xbfm)\equiv 1\) and consider homogeneous Dirichlet data, ie.\
we set \(\Gamma_D=\Gamma_0\cup\Gamma_1\cup\Gamma_2\) and \(\Gamma_N=\emptyset\).

Visualisations of the reference solution's mean and variance are shown in 
Figure~\ref{fig:exp1mv}. Note that, to enable a view of the inside,
all data with coordinates $[x_1, x_2, x_3]^\trans$ such that $x_2+x_3 > 1$ are clipped.

\begin{figure}[h]
\centering
\begin{tikzpicture}[baseline]
  \begin{axis}[
      name=plot1,
      title={Mean},
      xlabel={$x_3$},
      ylabel={$x_1$},
      zlabel={$x_2$},
      zlabel style = {rotate=-90},
      width=6cm,
      height=6cm,
      scale mode=scale uniformly,
    ]
    \addplot3 graphics[
      points={
        (1,0,0) => (0,149)
        (0,1,0) => (1577,258)
        (0,0,1) => (577,1523)
        (1,1,1) => (1004,1119)
        (1,1,0)
        (1,0,1)
        (0,1,1)
        (0,0,0)
    }] {./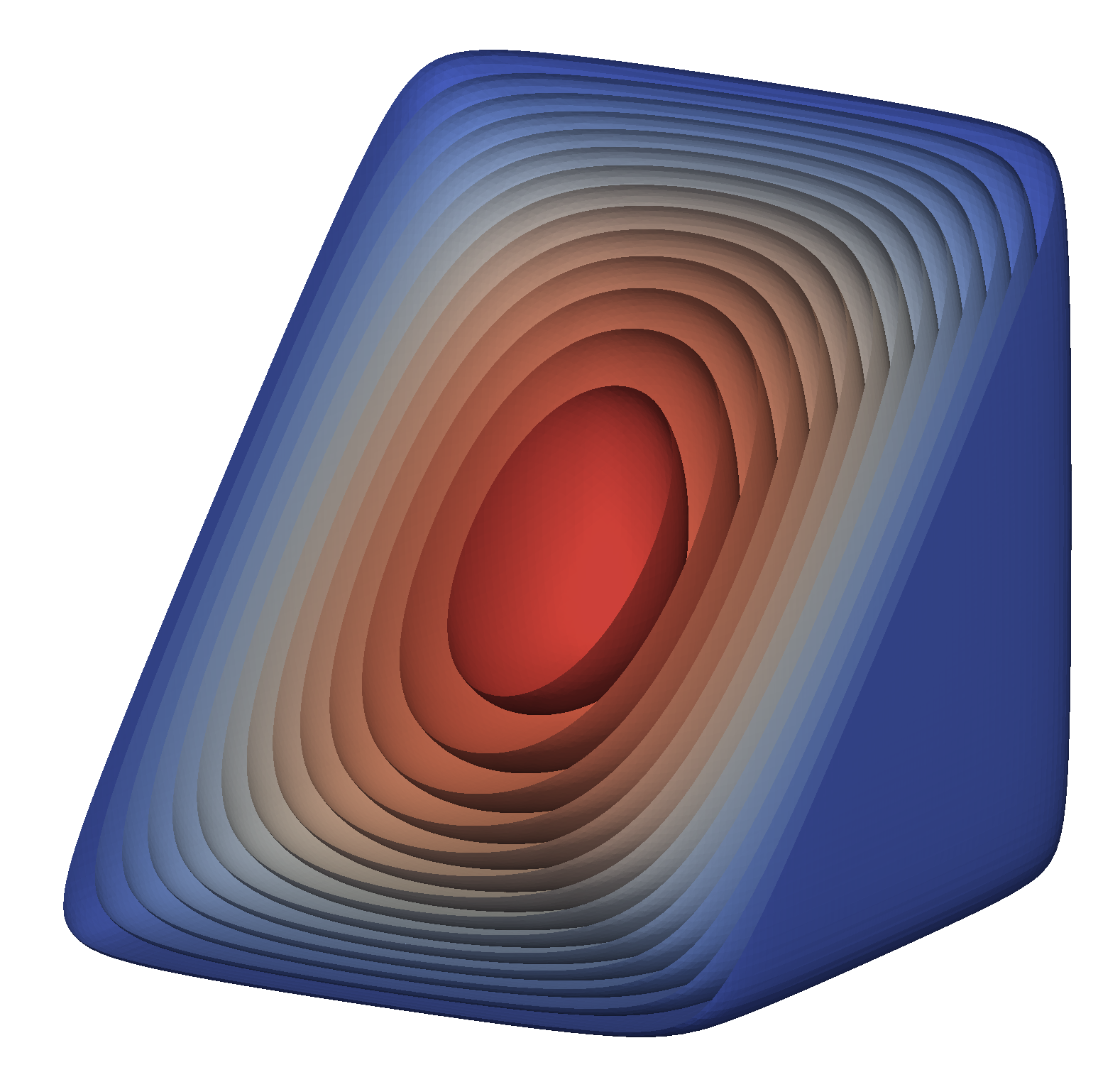};
  \end{axis}
  \begin{axis}[
      name=plot2,
      at=(plot1.below south east), anchor=above north east,
      yshift=-2ex,
      enlargelimits=false,
      axis on top,
      ytick=\empty,
      width=6cm,
      height=2cm,
    ] \addplot graphics[
      ymin=0, ymax=0.1,
      xmin=0.0, xmax=0.129993,
    ] {./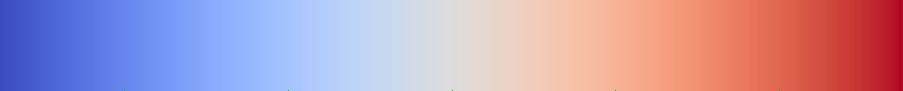};
  \end{axis}
\end{tikzpicture}%
\hspace*{0.6cm}%
\begin{tikzpicture}[baseline]
  \begin{axis}[
      name=plot1,
      title={Variance},
      xlabel={$x_3$},
      ylabel={$x_1$},
      zlabel={$x_2$},
      zlabel style = {rotate=-90},
      zticklabel pos=right,
      width=6cm,
      height=6cm,
      scale mode=scale uniformly,
    ]
    \addplot3 graphics[
      points={
        (1,0,0) => (0,149)
        (0,1,0) => (1577,258)
        (0,0,1) => (577,1523)
        (1,1,1) => (1004,1119)
        (1,1,0)
        (1,0,1)
        (0,1,1)
        (0,0,0)
    }] {./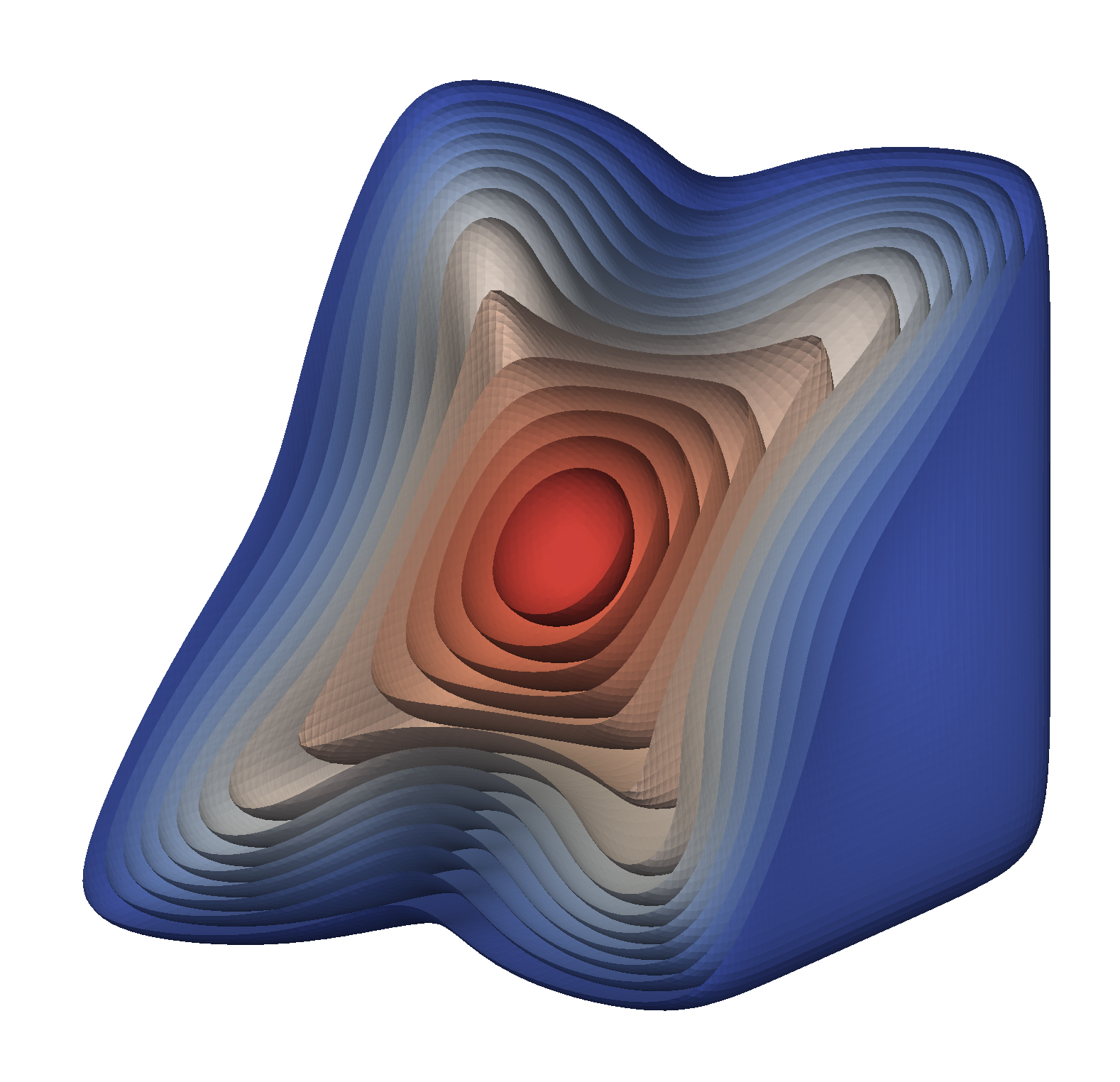};
  \end{axis}
  \begin{axis}[
      name=plot2,
      at=(plot1.below south east), anchor=above north east,
      yshift=-2ex,
      enlargelimits=false,
      axis on top,
      ytick=\empty,
      width=6cm,
      height=2cm,
    ] \addplot graphics[
      ymin=0, ymax=0.1,
      xmin=0.0, xmax=0.000192880,
    ] {./colorbar.png};
  \end{axis}
\end{tikzpicture}
\caption{Mean and variance of the solution.}
\label{fig:exp1mv}
\end{figure}

Figures~\ref{fig:exp1errormean} and~\ref{fig:exp1errorvar} show the estimated errors of the solution's 
mean on the left hand side and of the solution's variance on the right hand 
side, each versus the discretisation level for the different quadrature methods.
As expected, each of the quadrature methods achieves the predicted rate of convergence, however
QMC and SG provide slightly better errors in case of the variance.

\begin{figure}[htb]
\centering
\begin{minipage}{0.49\textwidth}
\centering
\begin{tikzpicture}[baseline]
  \begin{semilogyaxis}[
      xlabel={Level $l$},
      ylabel={Estimated error},
      xmajorgrids=true,
      yminorgrids=true,
      ymin=1e-2,
    ]
    \addplot[red, mark=o, mark size=1mm,line width=1pt] table [x=l,y=mH1] {./exp1errQMC.dat};
    \addlegendentry{QMC}
    \addplot[blue, mark=square, mark size=1mm,line width=1pt] table [x=l,y=mH1] {./exp1errSQ.dat};
    \addlegendentry{SG}
    \addplot[green!60!black, mark=diamond, mark size=1mm,line width=1pt] table [x=l,y=mH1] {./exp1errMC.dat};
    \addlegendentry{MC}
    \addplot[black, dashed] table [x=l,y=mH1a1] {./exp1errMC.dat};
    \addlegendentry{slope $2^{-l}$}
  \end{semilogyaxis}
\end{tikzpicture}
\caption{\(H^1\)-error in the mean.}
\label{fig:exp1errormean}
\end{minipage}
\begin{minipage}{0.49\textwidth}
\centering
\begin{tikzpicture}[baseline]
  \begin{semilogyaxis}[
      xlabel={Level $l$},
      ylabel={Estimated error},
      xmajorgrids=true,
      yminorgrids=true,
            yticklabel pos=right,
    ]
    \addplot[red, mark=o, mark size=1mm,line width=1pt] table [x=l,y=vW11] {./exp1errQMC.dat};
    \addlegendentry{QMC}
    \addplot[blue, mark=square, mark size=1mm,line width=1pt] table [x=l,y=vW11] {./exp1errSQ.dat};
    \addlegendentry{SG}
    \addplot[green!60!black, mark=diamond, mark size=1mm,line width=1pt] table [x=l,y=vW11] {./exp1errMC.dat};
    \addlegendentry{MC}
    \addplot[black, dashed] table [x=l,y=vW11a1] {./exp1errMC.dat};
    \addlegendentry{slope $2^{-l}$}
  \end{semilogyaxis}
\end{tikzpicture}
\caption{\(W^{1,1}\)-error in the variance.}
\label{fig:exp1errorvar}
\end{minipage}
\end{figure}

\subsection{Example}
The data in this example are given as follows. We remove the source, ie.\ \(f(\xbfm)\equiv 0\), and consider
\begin{equation*}
      \Gamma_N \isdef \Gamma_0 \cup \Gamma_1
      \quad \text{with} \quad
      g(\xbfm) \isdef \begin{cases} 1, & \xbfm \in \Gamma_0, \\ -1, & \xbfm \in \Gamma_1, \end{cases}
\end{equation*}
and $\Gamma_D \isdef \Gamma_2$.

The respective visualisations of the reference solution's mean and variance are depicted in 
Figure~\ref{fig:exp2mv}.
 
\begin{figure}[htb]
\centering
\begin{tikzpicture}[baseline]
  \begin{axis}[
      name=plot1,
      title={Mean},
      xlabel={$x_3$},
      ylabel={$x_1$},
      zlabel={$x_2$},
            zlabel style = {rotate=-90},
      width=6cm,
      height=6cm,
      scale mode=scale uniformly,
    ]
    \addplot3 graphics[
      points={
        (1,0,0) => (0,149)
        (0,1,0) => (1577,258)
        (0,0,1) => (577,1523)
        (1,1,1) => (1004,1119)
        (1,1,0)
        (1,0,1)
        (0,1,1)
        (0,0,0)
    }] {./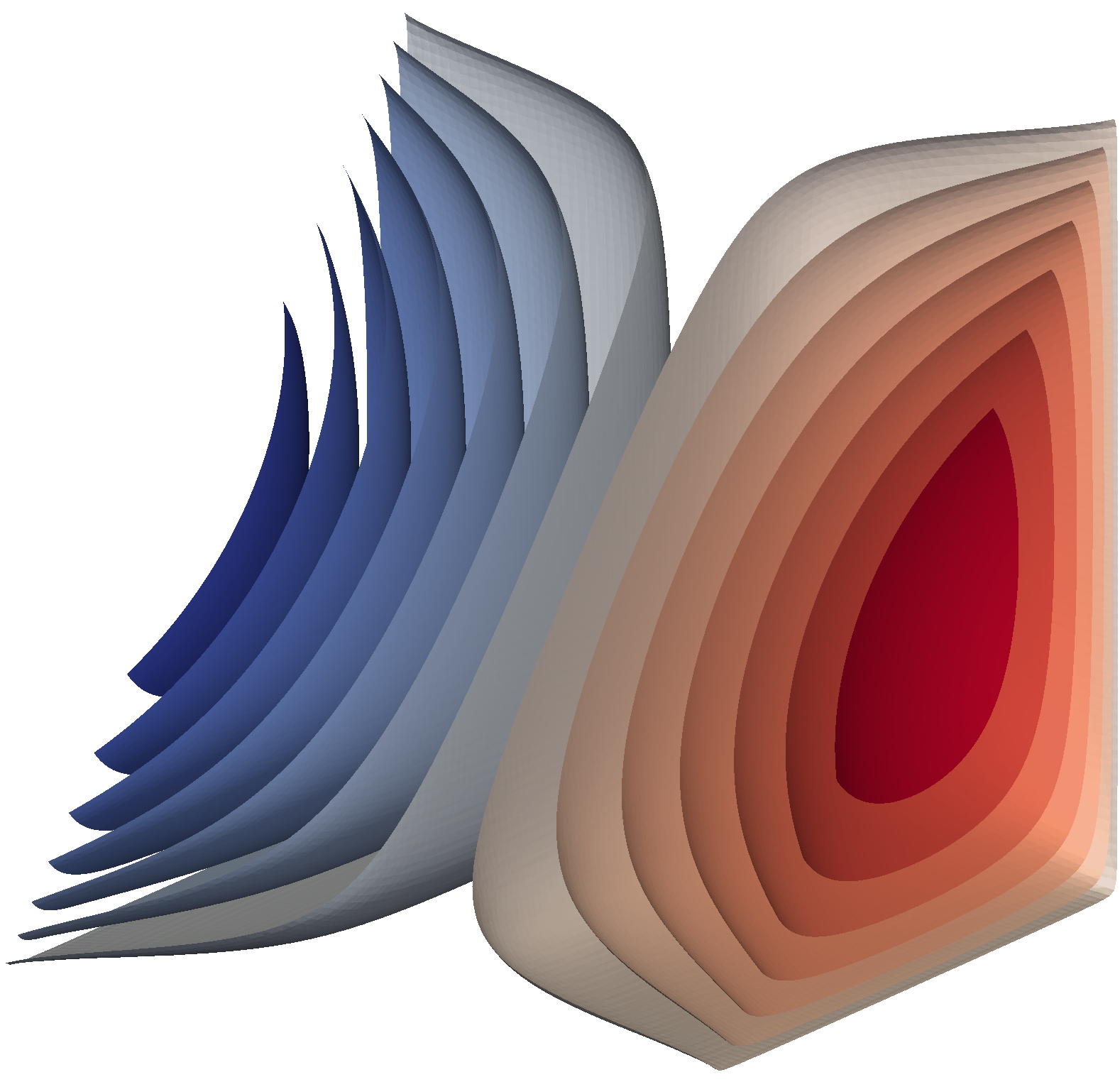};
  \end{axis}
  \begin{axis}[
      name=plot2,
      at=(plot1.below south east), anchor=above north east,
      yshift=-2ex,
      enlargelimits=false,
      axis on top,
      ytick=\empty,
      width=6cm,
      height=2cm,
    ] \addplot graphics[
      ymin=0, ymax=0.1,
      xmin=-0.527455, xmax=0.527295,
    ] {./colorbar.png};
  \end{axis}
\end{tikzpicture}%
\hspace*{0.6cm}%
\begin{tikzpicture}[baseline]
  \begin{axis}[
      name=plot1,
      title={Variance},
      xlabel={$x_3$},
      ylabel={$x_1$},
      zlabel={$x_2$},
            zlabel style = {rotate=-90},
      zticklabel pos=right,
      width=6cm,
      height=6cm,
      scale mode=scale uniformly,
    ]
    \addplot3 graphics[
      points={
        (1,0,0) => (0,149)
        (0,1,0) => (1577,258)
        (0,0,1) => (577,1523)
        (1,1,1) => (1004,1119)
        (1,1,0)
        (1,0,1)
        (0,1,1)
        (0,0,0)
    }] {./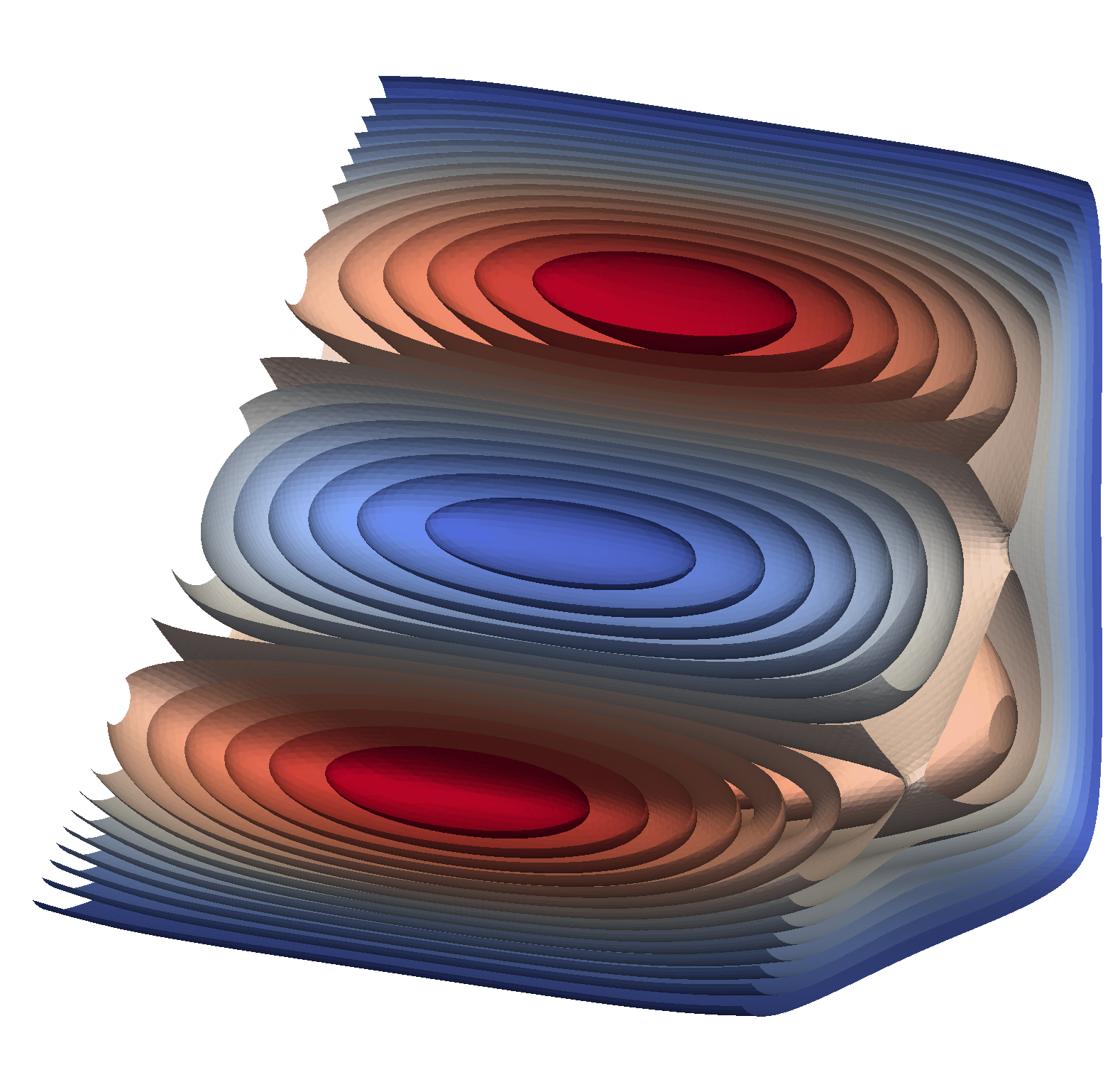};
  \end{axis}
  \begin{axis}[
      name=plot2,
      at=(plot1.below south east), anchor=above north east,
      yshift=-2ex,
      enlargelimits=false,
      axis on top,
      ytick=\empty,
      width=6cm,
      height=2cm,
    ] \addplot graphics[
      ymin=0, ymax=0.1,
      xmin=0.0, xmax=0.0175915,
    ] {./colorbar.png};
  \end{axis}
\end{tikzpicture}
\caption{Mean and variance of the solution.}
\label{fig:exp2mv}
\end{figure}

Figures~\ref{fig:exp2errormean} and~\ref{fig:exp2errorvar} exhibit the estimated errors of the solution's 
mean on the left hand side and of the solution's variance on the right hand 
side, each versus the discretisation level for the different quadrature methods.
Again, each of the quadrature methods achieves the predicted rate of convergence. As in the previous
example QMC and SG provide slightly better errors in case of the variance.

\begin{figure}[htb]
\centering
\begin{minipage}{0.49\textwidth}
\centering
\begin{tikzpicture}[baseline]
  \begin{semilogyaxis}[
      xlabel={Level $l$},
      ylabel={Estimated error},
      xmajorgrids=true,
      yminorgrids=true,
      ymin=3e-2,
    ]
    \addplot[red, mark=o, mark size=1mm,line width=1pt] table [x=l,y=mH1] {./exp2errQMC.dat};
    \addlegendentry{QMC}
    \addplot[blue, mark=square, mark size=1mm,line width=1pt] table [x=l,y=mH1] {./exp2errSQ.dat};
    \addlegendentry{SG}
    \addplot[green!60!black, mark=diamond, mark size=1mm,line width=1pt] table [x=l,y=mH1] {./exp2errMC.dat};
    \addlegendentry{MC}
    \addplot[black, dashed] table [x=l,y=mH1a1] {./exp2errMC.dat};
    \addlegendentry{slope $2^{-l}$}
  \end{semilogyaxis}
\end{tikzpicture}
\caption{\(H^1\)-error in the mean.}
\label{fig:exp2errormean}
\end{minipage}
\begin{minipage}{0.49\textwidth}
\centering
\begin{tikzpicture}[baseline]
  \begin{semilogyaxis}[
      xlabel={Level $l$},
      ylabel={Estimated error},
      xmajorgrids=true,
      yminorgrids=true,
            yticklabel pos=right,
    ]
    \addplot[red, mark=o, mark size=1mm,line width=1pt] table [x=l,y=vW11] {./exp2errQMC.dat};
    \addlegendentry{QMC}
    \addplot[blue, mark=square, mark size=1mm,line width=1pt] table [x=l,y=vW11] {./exp2errSQ.dat};
    \addlegendentry{SG}
    \addplot[green!60!black, mark=diamond, mark size=1mm,line width=1pt] table [x=l,y=vW11] {./exp2errMC.dat};
    \addlegendentry{MC}
    \addplot[black, dashed] table [x=l,y=vW11a1] {./exp2errMC.dat};
    \addlegendentry{slope $2^{-l}$}
  \end{semilogyaxis}
\end{tikzpicture}
\caption{\(W^{1,1}\)-error in the variance.}
\label{fig:exp2errorvar}
\end{minipage}
\end{figure}

\begin{figure}[htb]
\centering
\begin{tikzpicture}[baseline]
  \begin{axis}[
      title={$\Vbfm^{(0)}$},
      xlabel={$x_3$},
      ylabel={$x_1$},
      zlabel={$x_2$},
            zlabel style = {rotate=-90},
      width=6cm,
      height=6cm,
      scale mode=scale uniformly,
    ]
    \addplot3 graphics[
      points={
        (1,0,0) => (0,149)
        (0,1,0) => (1577,258)
        (0,0,1) => (577,1523)
        (1,1,1) => (1004,1119)
        (1,1,0)
        (1,0,1)
        (0,1,1)
        (0,0,0)
    }] {./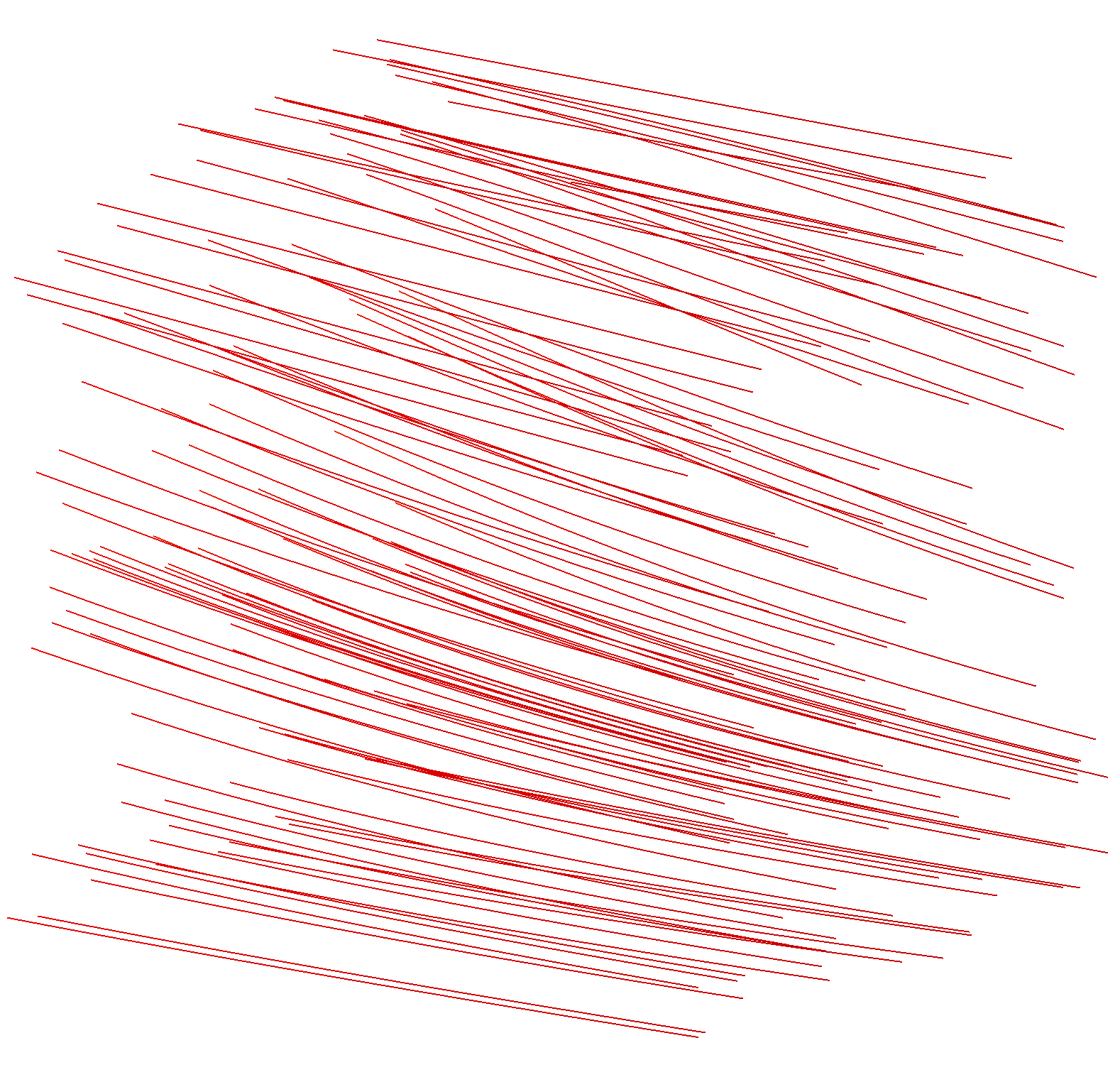};
  \end{axis}
\end{tikzpicture}%
\hspace*{0.6cm}%
\begin{tikzpicture}[baseline]
  \begin{axis}[
      title={$\Vbfm^{(1)}$},
      xlabel={$x_3$},
      ylabel={$x_1$},
      zlabel={$x_2$},
            zlabel style = {rotate=-90},
      zticklabel pos=right,
      width=6cm,
      height=6cm,
      scale mode=scale uniformly,
    ]
    \addplot3 graphics[
      points={
        (1,0,0) => (0,149)
        (0,1,0) => (1577,258)
        (0,0,1) => (577,1523)
        (1,1,1) => (1004,1119)
        (1,1,0)
        (1,0,1)
        (0,1,1)
        (0,0,0)
    }] {./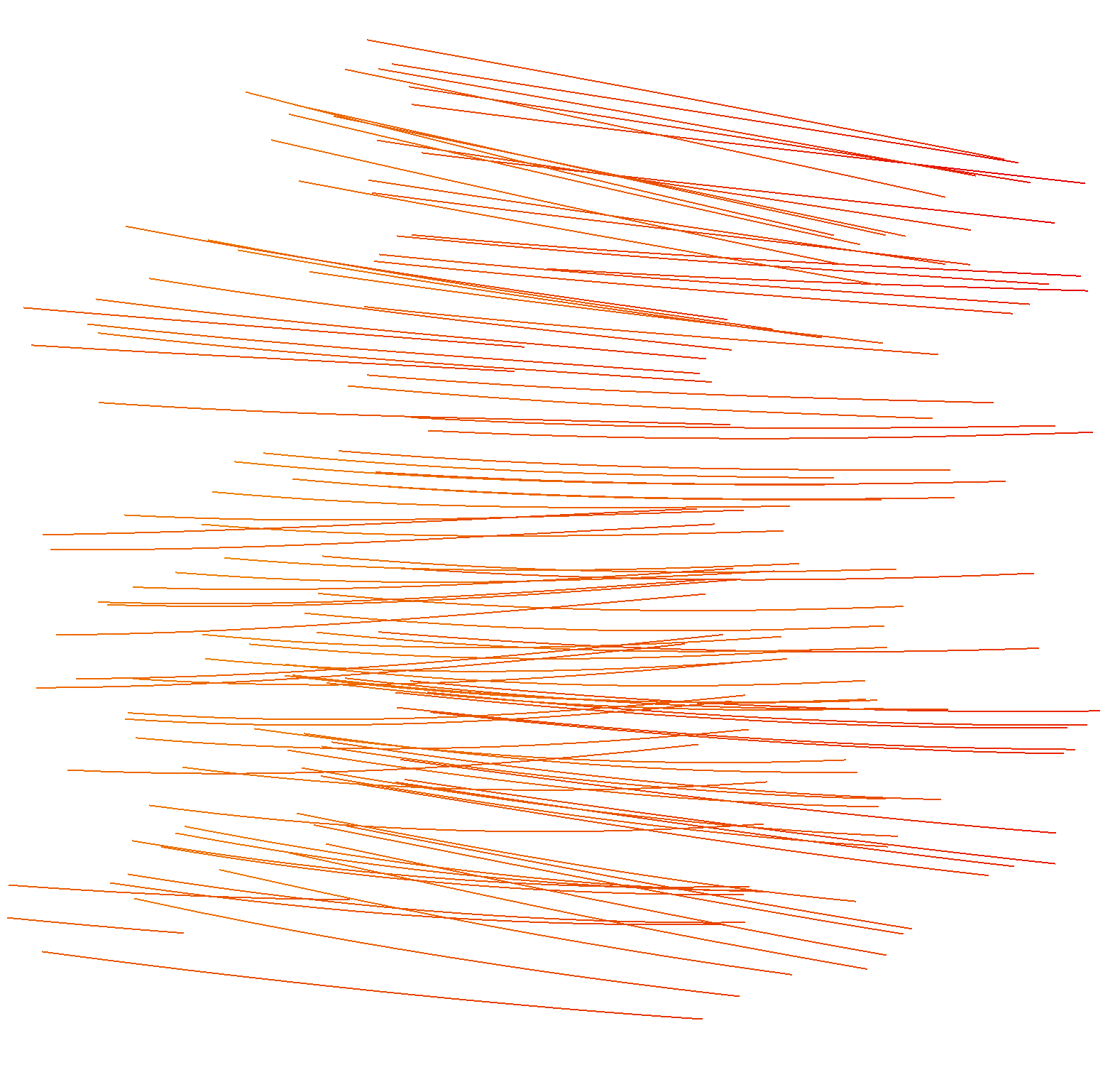};
  \end{axis}
\end{tikzpicture}\\
\begin{tikzpicture}[baseline]
  \begin{axis}[
      title={$\Vbfm^{(2)}$},
      xlabel={$x_3$},
      ylabel={$x_1$},
      zlabel={$x_2$},
            zlabel style = {rotate=-90},
      width=6cm,
      height=6cm,
      scale mode=scale uniformly,
    ]
    \addplot3 graphics[
      points={
        (1,0,0) => (0,149)
        (0,1,0) => (1577,258)
        (0,0,1) => (577,1523)
        (1,1,1) => (1004,1119)
        (1,1,0)
        (1,0,1)
        (0,1,1)
        (0,0,0)
    }] {./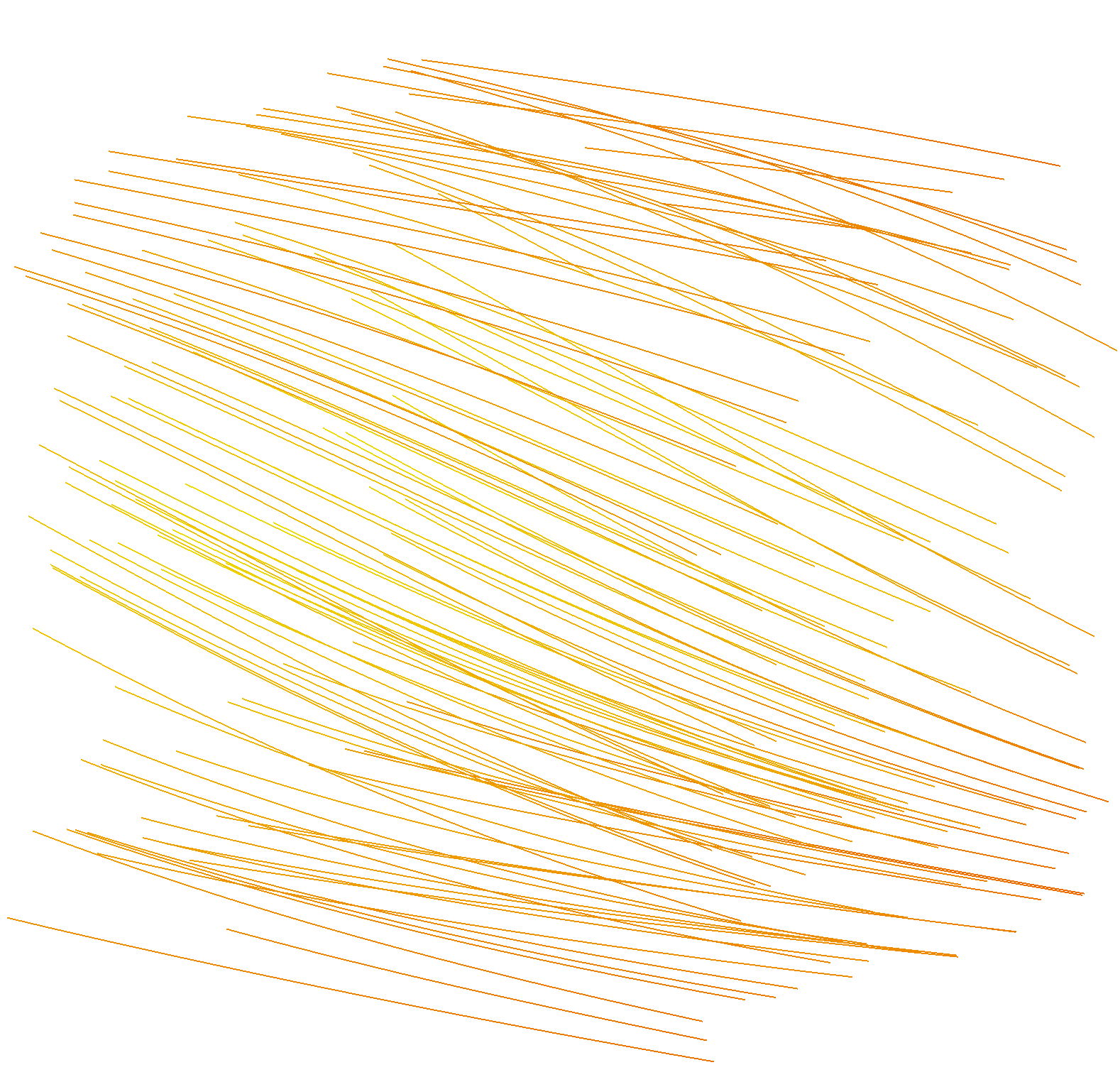};
  \end{axis}
\end{tikzpicture}%
\hspace*{0.6cm}%
\begin{tikzpicture}[baseline]
  \begin{axis}[
      title={$\Vbfm^{(3)}$},
      xlabel={$x_3$},
      ylabel={$x_1$},
      zlabel={$x_2$},
            zlabel style = {rotate=-90},
      zticklabel pos=right,
      width=6cm,
      height=6cm,
      scale mode=scale uniformly,
    ]
    \addplot3 graphics[
      points={
        (1,0,0) => (0,149)
        (0,1,0) => (1577,258)
        (0,0,1) => (577,1523)
        (1,1,1) => (1004,1119)
        (1,1,0)
        (1,0,1)
        (0,1,1)
        (0,0,0)
    }] {./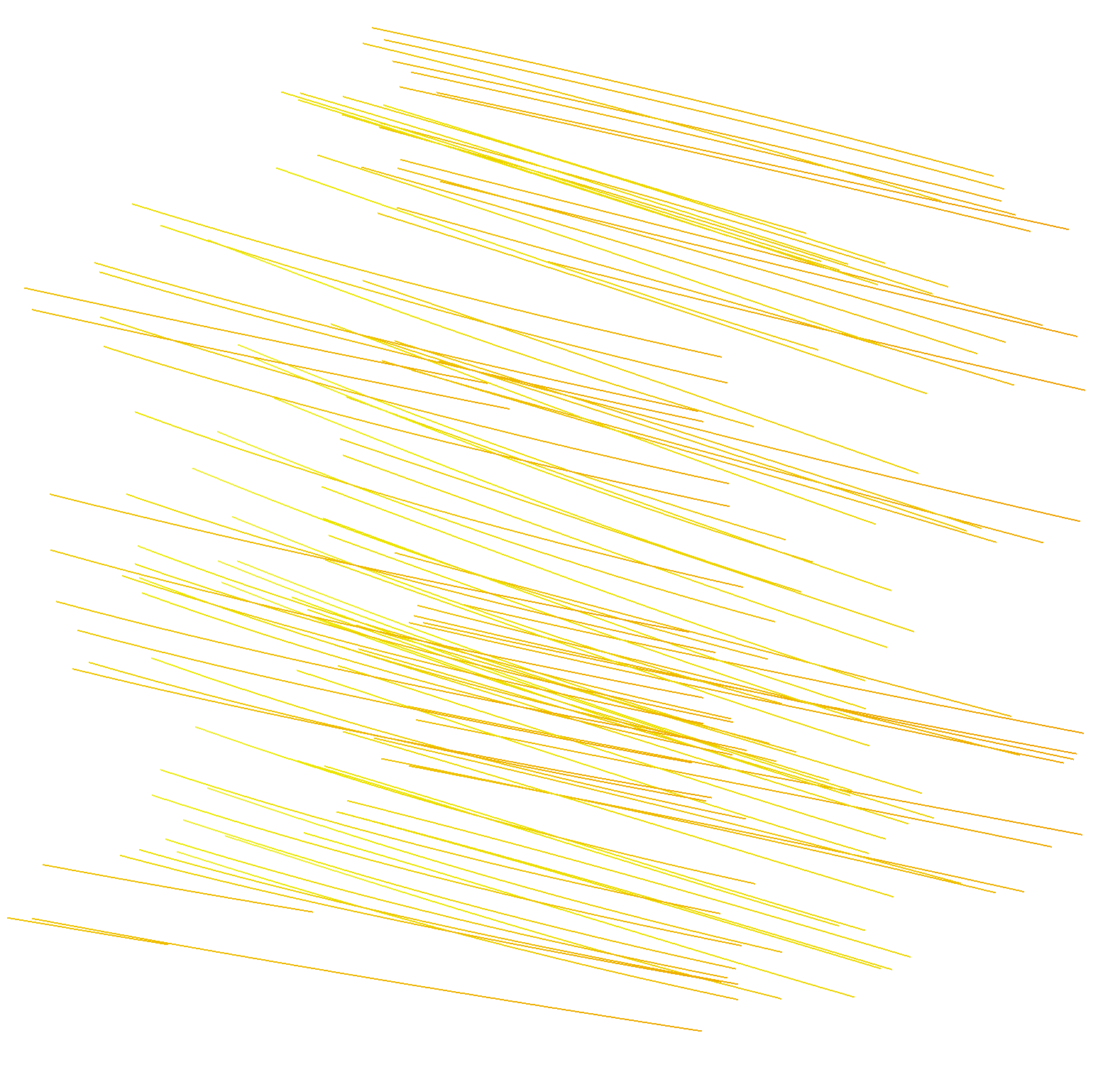};
  \end{axis}
\end{tikzpicture}\\
\begin{tikzpicture}[baseline]
  \begin{axis}[
      title={$\Vbfm^{(4)}$},
      xlabel={$x_3$},
      ylabel={$x_1$},
      zlabel={$x_2$},
            zlabel style = {rotate=-90},
      width=6cm,
      height=6cm,
      scale mode=scale uniformly,
    ]
    \addplot3 graphics[
      points={
        (1,0,0) => (0,149)
        (0,1,0) => (1577,258)
        (0,0,1) => (577,1523)
        (1,1,1) => (1004,1119)
        (1,1,0)
        (1,0,1)
        (0,1,1)
        (0,0,0)
    }] {./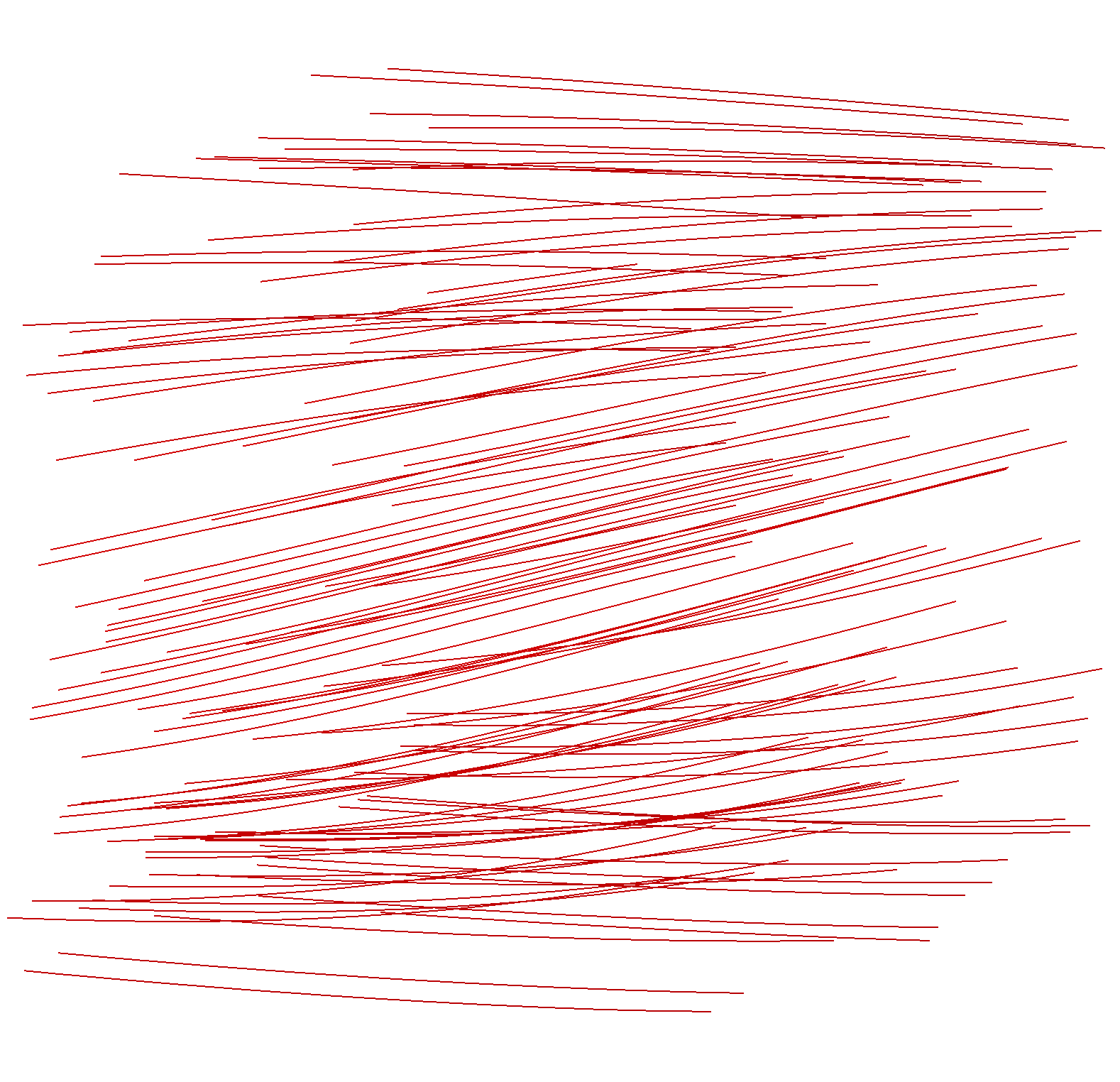};
  \end{axis}
\end{tikzpicture}%
\hspace*{0.6cm}%
\begin{tikzpicture}[baseline]
  \begin{axis}[
      title={$\Vbfm^{(5)}$},
      xlabel={$x_3$},
      ylabel={$x_1$},
      zlabel={$x_2$},
            zlabel style = {rotate=-90},
      zticklabel pos=right,
      width=6cm,
      height=6cm,
      scale mode=scale uniformly,
    ]
    \addplot3 graphics[
      points={
        (1,0,0) => (0,149)
        (0,1,0) => (1577,258)
        (0,0,1) => (577,1523)
        (1,1,1) => (1004,1119)
        (1,1,0)
        (1,0,1)
        (0,1,1)
        (0,0,0)
    }] {./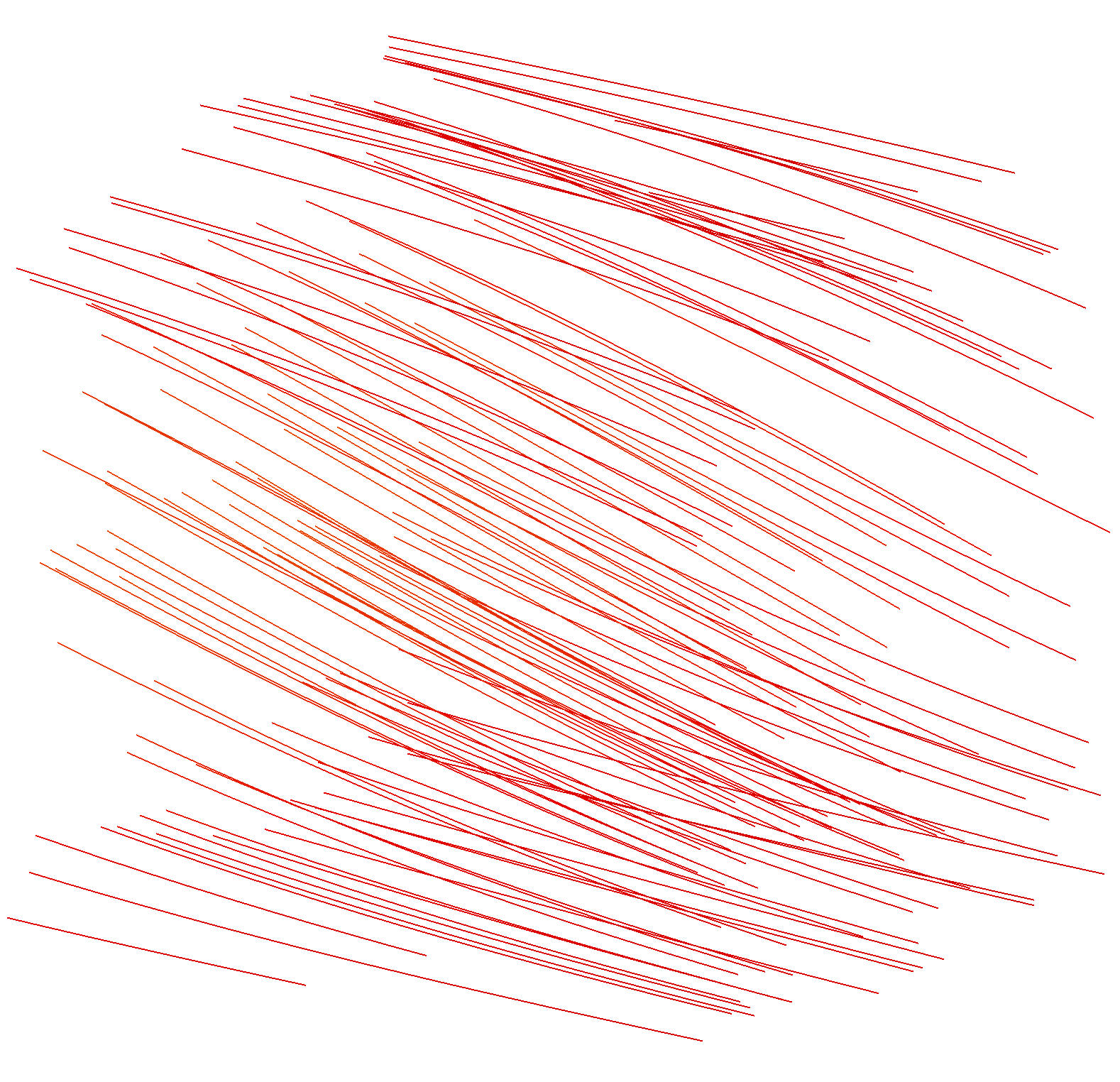};
  \end{axis}
\end{tikzpicture}
\caption{Stream traces of some samples of the vector field $\Vbfm/\|\Vbfm\|_2$ used for the examples.}
\label{fig:exp12V}
\end{figure}

\section{Conclusion}
In this article, we have introduced the diffusion coefficient
\eqref{eq:AdmV} that may be used to model anisotropic diffusion 
that has a notable direction of diffusion with an associated strength,
which both are considered to be subject to uncertainty;
this is encoded by the vector field $\Vbfm$. While this type 
of diffusion coefficient does not model all possible anisotropic 
diffusion coefficients, it can be used to model both diffusion in 
media that consist of thin fibres or thin sheets, given that either 
the diffusion between the fibres or in the sheets is isotropic 
with a global strength that is not subject to uncertainty.

We derived, based on the decay of the Karhunen\hyp{}Lo\`eve 
expansion of $\Vbfm$, related decay rates for the solution's derivatives;
given a sufficiently fast decaying Karhunen\hyp{}Lo\`eve expansion,
this regularity then provides dimension independent convergence
when considering the quasi\hyp{}Monte Carlo quadrature to
approximate quantities of interest that require the integration
of the solution with respect to the random parameter. Furthermore, 
it also allows the use of other quadrature methods like the anisotropic 
sparse grid quadrature which has been considered in the numerical
experiments. The numerical results corroborate the theoretical findings.


Lastly, note that the model for the diffusion coefficient
may, for example, be generalised to
\begin{equation*}
  \Abfm(\xbfm, \omega) \isdef a(\xbfm, \omega)\Ibfm + \groupp[\Big]{\norm[\big]{\Vbfm(\xbfm, \omega)}_2 - a(\xbfm, \omega)} \frac{\Vbfm(\xbfm, \omega) \Vbfm^\trans(\xbfm, \omega)}{\Vbfm^\trans(\xbfm, \omega) \Vbfm(\xbfm, \omega)} ,
\end{equation*}
without effecting regularity.
With this type of diffusion coefficient, the diffusion accounted for
between the fibres or in the sheets is still isotropic,
but can be spatially varying and also subject to uncertainty.
We especially note that for two spatial dimensions this
generalisation already models all types of random anisotropic diffusion coefficients.

\bibliographystyle{plain}
\bibliography{biblio}
\end{document}